\documentclass[12pt,a4paper]{article}

\usepackage{hyperref}


\usepackage[english]{babel}
\usepackage{fancyhdr}
\usepackage{fullpage}
\usepackage[dvips]{graphicx}
\usepackage[T1]{fontenc}
\usepackage{fouriernc}
\usepackage[latin1]{inputenc}
\usepackage{amsmath}
\usepackage{amsfonts}
\usepackage{amssymb}
\usepackage{mathrsfs}
\usepackage{amsthm}
\usepackage{latexsym}
\usepackage{array}
\usepackage{mathrsfs}
\usepackage{amsxtra}
\usepackage{amscd}
\usepackage{mathtools}
\usepackage{verbatim}
\usepackage{enumerate}
\usepackage[title]{appendix}
\usepackage[usenames]{color}
\definecolor{beige}{rgb}{0.96, 0.96, 0.86}
\definecolor{airforceblue}{rgb}{0.36, 0.54, 0.66}
\definecolor{antiquefuchsia}{rgb}{0.57, 0.36, 0.51}
\definecolor{awesome}{rgb}{1.0, 0.13, 0.32}

\usepackage{tikz} 
\usetikzlibrary{arrows,shapes,matrix}

\usepackage{xcolor}
\hypersetup{
    colorlinks,
    linkcolor={green!60!blue},
    citecolor={yellow!30!red},
    urlcolor={green!60!blue}
}









\newcommand*{\Cb}[1]{C([0,T],#1)}

\newcommand*{\medcap}{\mathbin{\scalebox{1.5}{\ensuremath{\cap}}}}

\newcommand*{\medcup}{\mathbin{\scalebox{1.5}{\ensuremath{\cup}}}}

\numberwithin{equation}{section}

\theoremstyle{plain}
\newtheorem{theorem}{Theorem}[section]

\newtheorem{proposition}[theorem]{Proposition}
\newtheorem{lemma}[theorem]{Lemma}

\newtheorem{assumption}[theorem]{Assumption}

\theoremstyle{definition}
\newtheorem{remark}[theorem]{Remark}
\newtheorem{example}[theorem]{Example}

\renewcommand{\epsilon}{\varepsilon}
\renewcommand{\phi}{\varphi}

\def\qed{{\hfill\hbox{\enspace${ \blacksquare}$}} \smallskip}




\newcommand{\e}{e}
\newcommand{\leb}{L}

  \def\Swiech
{{\accent1 S}wie{\hbox{\kern -0.31em\lower
0.77ex \hbox{$\textfont1=\scriptfont1 \lhook$}}}ch}

\def\SWIECH
{{\accent1 S}WIE{\hbox{\kern -0.21em\lower
0.77ex\hbox{$\textfont1=\scriptfont1
\lhook$}}}CH}


\usepackage{dsfont} 

\def\b*{\begin{eqnarray*}}
\def\e*{\end{eqnarray*}}




\def \0{\mathbf{0}}


\theoremstyle{plain}

\theoremstyle{definition}


\newcommand*{\blp}{(}
\newcommand*{\brp}{)}

\theoremstyle{plain}

\theoremstyle{definition}

 \newtheorem*{notation}{Notation}

 \theoremstyle{definition}

\def\<{\left\langle }
\def\>{\right\rangle }




\title{A note on stochastic Fubini's theorem and stochastic convolution}
\date{}
\author{Mauro Rosestolato\thanks{CMAP, \'Ecole Polytechnique, Paris, France,
e-mail: \texttt{mauro.rosestolato@polytechnique.edu}. 
This research has been
partially supported by
the ERC
321111 Rofirm.
}
}

\begin{document}

\maketitle

\begin{abstract} 
We provide a version of the stochastic Fubini's theorem which does not depend on the particular stochastic integrator chosen 
as far as
the 
stochastic integration 
is built as a continuous linear
operator
from an $L^p$ space of 
Banach space-valued
 processes (the stochastically integrable processes) to an $L^p$ space of Banach space-valued paths (the integrated processes).
Then, for integrators on a Hilbert space $H$,  we  consider stochastic convolutions with respect to a strongly continuous map $R\colon (0,T]\rightarrow L(H)$, not necessarily a semigroup.
We prove existence of predictable versions of stochastic convolutions and we characterize the measurability needed by operator-valued processes in order to be convoluted with $R$.
Finally, when $R$ is a $C_0$-semigroup and the stochastic integral provides continuous paths, we show existence of a continuous version of the convolution, by adapting the factorization method to the present setting.
\end{abstract}

\vspace{10pt}
\noindent\textbf{Keywords:} 
stochastic Fubini's theorem,
stochastic convolution.

\vspace{10pt} 
\noindent\textbf{AMS 2010 subject classification:} 
28C05,
28C20, 
60B99,
60G99,
60H05.

\section{Introduction}




In this note
we prove 
a stochastic Fubini's theorem and apply it to obtain
existence of predictable/continuous versions of stochastic convolutions.
We do not choose any particular stochastic integrator.
We look at the stochastic integration simply as a  linear and continuous operator $ \mathfrak {L}$ 
from an $L^p$ space of Banach space-valued processes, the stochastically integrable processes, to another $L^p$ space, containing
functions whose values are 
 the paths of the stochastic integrals.
The paths do not need to be continuous.
Within this setting,
the continuity assumption on 
$ \mathfrak {L}$ plays the  role of It\=o's isometry
or of the Burkholder-Davis-Gundy inequality
 in the standard construction of stochastic integrals with respect to square integrable continuous martingales.

For such an operator $ \mathfrak L$, we  prove
 the stochastic Fubini's theorem (Theorem~\ref{2015-09-21:01}).
The result can be applied e.g.\ to stochastic integration in infinite dimensional spaces with respect to $L^p$-integrable martingales (\cite[Ch.\ 8]{Peszat2007}) or more general martingale-valued measures (for the finite dimensional case, see e.g.\ \cite[Ch.\ 4]{Applebaum2009}), generalizing standard results as 
\cite[Theorem 4.33]{DaPrato2014}, \cite[Theorem 2.8]{Gawarecki2011}, \cite[Theorem 8.14]{Peszat2007}.

Secondly,  we particularize the study to the case in which $ \mathfrak  L$ is defined on a space of
  $L_2(U,H)$-valued processes,
 where $U,H$ are separable Hilbert spaces
 and $L_2(U,H)$ is the vector space of Hilbert-Schmidt operators from $U$ into $H$.
Denote $ \mathfrak L$ by $ \mathfrak I$,
in this particular case.
For a strongly continuous map $R\colon (0,T]\rightarrow L(H)$ and for a process $\Phi\colon \Omega\times [0,T]\rightarrow L(U,H)$ such that the composition 
$\mathbf{1}_{(0,t]}(\cdot)R(t-\cdot)\Phi$ 
 belongs to the domain of $ \mathfrak I$, we consider the convolution process
\begin{equation}
  \label{eq:2017-06-17:00}
   (\mathfrak I(\mathbf{1}_{[0,t)}(\cdot)R(t-\cdot)\Phi))_t\qquad t\in[0,T].
 \end{equation}
 By using the stochastic Fubini's theorem, we show that \eqref{eq:2017-06-17:00} admits a jointly measurable version
(Theorem~\ref{2017-05-07:11}).
The joint measurability of the stochastic convolution is of interest e.g.\ when its paths must be integrated, as it happens in the factorization formula (\cite[Theorem 5.10]{DaPrato2014}).
We also
provide a characterisation of the measurability needed by functions $\Phi\colon\Omega\times [0,T]\rightarrow L(U,H)$ in order
that $\mathbf{1}_{(0,t]}(\cdot)R(t-\cdot)\Phi$ has the necessary measurability required by the operator $ \mathfrak I$ (Theorem~\ref{2017-05-07:12}).
This measurability result turns out to be useful e.g.\ 
in order to understand
what are the most general measurability conditions for  coefficients of stochastic differential equations in Hilbert spaces for which mild solutions are considered.

Finally, in case $ \mathfrak I$ takes values in a space of processes with continuous paths and $R=S$ is a $C_0$-semigroups, by adapting
the factorization method to the present setting,
 we show that 
\eqref{eq:2017-06-17:00} admits a continuous version (Theorem~\ref{2016-01-17:06}).

\section{Stochastic Fubini's theorem}
\label{2016-01-19:05}

  Throughout this  section,
   $(G,\mathcal{G},\mu)$
and $(D_2,\mathcal{D}_2,\nu_2)$
 are  positive finite measure spaces, $(D_1,\mathcal{D}_1)$ is a measurable space, and $\nu_1$ is a kernel from $D_2$ to $D_1$, i.e.\ 
$$
\nu_1\colon \mathcal{D}_1\times D_2\rightarrow \mathbb{R}^+
$$
is such that
\begin{enumerate}[(i)]
\item $\nu_1(A,\cdot)$ is $\mathcal{D}_2$-measurable, for all $A\in \mathcal{D}_1$;
\item $\nu_1(\cdot,x)$ is a positive measure, for all $x\in D_2$.
\end{enumerate}
We assume that
$$
C\coloneqq \int_{D_2} \nu_1(D_1,x)\nu_2(dx)<\infty.
$$
Let  $D\coloneqq D_1\times D_2$.
On $(D,\mathcal{D}_1 \otimes \mathcal{D}_2)$, we define the meaure $\nu$ by
$$
\nu(A)\coloneqq \int_{D_2} 
\left( 
  \int_{D_1}\mathbf{1}_{A}(x_1,x_2)
  \nu_1(dx_1,x_2) 
\right) \nu_2(dx_2),\qquad \forall A\in \mathcal{D}_1 \otimes \mathcal{D}_2.
$$
Notice that $\nu(D)=C$ is finite.

Let $\mathcal{D}$  be 
a given sub-$\sigma$-algebra 
of $\mathcal{D}_1 \otimes \mathcal{D}_2$.
 When we consider   measurability or integrability with respect to $G$ (resp.\ $D_1$, $D_2$, $D_1\times D_2$, $D$), we always mean it with respect to the space $ ( G,\mathcal{G},\mu ) $ (resp.\ $ ( D_1,\mathcal{D}_1,\nu_1 ) $,
$( D_2,\mathcal{D}_2,\nu_2 )$, 
$( D,\mathcal{D},\nu )$).
According to that, if we write, for example
 $L^1(D,V)$,
for some Banach space $V$,
 we mean $L^1((D,\mathcal{D},\nu),V)$, and similarly for other spaces of integrable functions on $G$, $D_1$, $D_2$, $D$.

Let $E$ be a given Banach space. For
 $p,q\in [1,\infty)$,
we denote by $L^{p,q}_\mathcal{D}(E)$  the space of measurable functions $f\colon (D,\mathcal{D})\rightarrow E$ such that
\begin{enumerate}[(i)]
\item there exists $N\in \mathcal{D}$ such that $\nu(N)=0$ and $f(G\setminus N)$ is separable;
\item 
the following integrability condition holds:
$$
|f|_{p,q}\coloneqq  \left( \int_{D_2} \left( \int_{D_1}|f(x,y)|_E^pd\nu_1(dx,y) \right)^{q/p}  \nu_2(dy) \right) ^{1/q}<\infty.
$$
\end{enumerate}

It is not difficult to see that $(L^{p,q}_{\mathcal{D}}(E),|\cdot|_{L^{p,q}_{\mathcal{D}}(E)})$ is a Banach space, with the usual identification $f=g$ if and only if $f=g$ $\nu$-a.e..
Indeed, if $\{f_n\}_{n\in \mathbb{N}}$ is Cauchy in $L^{p,q}_{\mathcal{D}}(E)$, then it is Cauchy also in $L^1((D,\mathcal{D},\nu),E)$. 
Passing to a subsequence if necessary, we may assume that $f_n\rightarrow f$ $\nu$-a.e., for some $f\in L^1((D,\mathcal{D},\nu),E)$.
Now Fatou's lemma gives $f\in L^{p,q}_{\mathcal{D}}(E)$ and $f_n\rightarrow f$ in $L^{p,q}_{\mathcal{D}}(E)$.

Finally, we use the short notation
$L^1(D\times G,E)$ for the space
$$
L^1((D\times G,\mathcal{D} \otimes \mathcal{G},\nu \otimes  \mu),E).
$$

\bigskip
We will prove
the stochastic Fubini's theorem
first for simple functions and then
for the general case 
through
 approximation.
We need the following
  preparatory  lemma.

\begin{lemma}\label{2015-08-15:00}
 Let $p,q\in [1,\infty)$  and  
$f\in L^1(G,L^{p,q}_{\mathcal{D}}(E))$.
\noindent If $q>1$, assume that
\begin{equation}
  \label{eq:2017-05-06:00}
  C(p,q)\coloneqq
 \left( 
 \int_{D_2}  \left( \nu_1(D_1,x) \right) ^{\frac{q(p-1)}{p(q-1)}}
\nu_2(dx)
 \right) ^{\frac{q-1}{q}}
<\infty.
\end{equation}
If $q=1$ and $p>1$, assume that
\begin{equation}
  \label{eq:2017-05-06:01}
  C(p,1)\coloneqq
\sup_{x\in D_2}
 \left(  \nu_1(D_1,x) \right) ^{\frac{p-1}{p}}
<\infty.
\end{equation}
Define $C(1,1)\coloneqq 1$.
Then there exist measurable functions
\begin{gather}
   \tilde f\colon   \left(  D\times G,
 \mathcal{D}  \otimes  \mathcal{G}\right)   \rightarrow E\label{2015-09-14:05}\\ 
   \tilde f_n\colon  \left(  D\times G,
 \mathcal{D} \otimes  \mathcal{G} \right) \rightarrow E,\ n\in \mathbb{N}\label{2015-09-14:06} 
 \end{gather}
such that
\begin{gather}
  \tilde  f(\cdot,y)\in L^{p,q}_\mathcal{D}(E),\ \forall y\in G,\label{2015-08-14:21}\\
  G\rightarrow L^{p,q}_\mathcal{D}(E),\ y\mapsto \tilde f(\cdot,y) \  \mbox{is measurable}\label{2015-08-14:22}\\ 
  \tilde f(\cdot,y)=f(y) \ \mbox{in }L^{p,q}_\mathcal{D}
(E) \ \mu\mbox{-a.e.\ }y\in G,\label{2015-08-14:15}
\end{gather}
\begin{gather}
  \tilde f_n(\cdot,y)\in L^{p,q}_\mathcal{D}
(E),\ 
\forall y\in G,\ \forall n\in \mathbb{N},
\label{2015-09-14:03}\\
  G\rightarrow L^{p,q}_\mathcal{D}(E),\ y\mapsto \tilde f_n(\cdot,y)\mbox{ is a simple function, }
\forall  n\in \mathbb{N},
\label{2015-09-14:04}\\
\hskip-1cm\lim_{n\rightarrow \infty}
\int_G \left( 
\int_{D_2}
 \left( 
\int_{D_1}
|\tilde f_n((x_1,x_2),y)
-
\tilde f((x_1,x_2),y)
|_E^p
\nu_1(dx_1,x_2)
 \right) ^{q/p}
\nu_2(dx_2)
 \right)^{1/q}
\mu(dy)=0
  \label{2015-08-15:06}
\end{gather}
\begin{gather}
  \tilde f(x,\cdot)\in L^1(G,E),\ \forall
  x\in D,\label{2015-08-14:16}\\
  D\rightarrow L^1(G,E),\ x\mapsto \tilde f(x,\cdot),
\ \mbox{belongs to\ } L^{p,q}(D,L^1(G,E))
\label{2015-09-14:10}
\end{gather}
\begin{gather}
  \tilde f_n(x,\cdot)\in L^1(G,E),\ \forall
 x\in D,\ 
\forall n\in \mathbb{N},
\label{2015-09-14:08}
  \\
  D\rightarrow L^1(G,E),\ x\mapsto \tilde f_n(x,\cdot), \ \mbox{belongs to }L^{p,q}\left(D,L^1(G,E)\right)\label{2015-09-14:09}\\
  \lim_{n\rightarrow \infty}\int_{D_2} \left( \int_{D_1}\left|\tilde f_n((x_1,x_2),\cdot)-\tilde f((x_1,x_2),\cdot)\right|_{L^1(G,E)}
^p
\nu_1(dx_1,x_2)
 \right) 
^{q/p}
\nu_2(dx_2)=0.\label{2015-08-15:10}
\end{gather}
\end{lemma}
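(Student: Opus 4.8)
The plan is to build $\tilde f$ and the $\tilde f_n$ by Bochner-approximating $f$ with simple functions valued in $L^{p,q}_{\mathcal D}(E)$, replacing each value by a genuine $E$-valued representative, and then passing to a pointwise a.e.\ limit on $D\times G$. Two elementary inequalities drive everything. First, because $\nu(D)=C<\infty$, H\"older's inequality yields a continuous embedding $L^{p,q}_{\mathcal D}(E)\hookrightarrow L^1(D,E)$ with $|g|_{L^1(D,E)}\le C(p,q)\,|g|_{p,q}$: for $q>1$ one applies H\"older in $D_1$ with exponents $p,\tfrac{p}{p-1}$ and then in $D_2$ with exponents $q,\tfrac{q}{q-1}$, the weight $\nu_1(D_1,x)^{\frac{q(p-1)}{p(q-1)}}$ appearing exactly as in \eqref{eq:2017-05-06:00}; the cases $q=1<p$ and $p=q=1$ reduce respectively to \eqref{eq:2017-05-06:01} and to the trivial identity $|g|_{L^1(D,E)}=|g|_{1,1}$. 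Second, two applications of Minkowski's integral inequality (with exponent $p\ge1$ for $D_1$ against $G$, then $q\ge1$ for $D_2$ against $G$) give the reverse control $|h|_{L^{p,q}(D,L^1(G,E))}\le\int_G|h(\cdot,y)|_{p,q}\,\mu(dy)$ for every jointly $\mathcal D\otimes\mathcal G$-measurable $h$. These are the only places where the hypotheses \eqref{eq:2017-05-06:00}--\eqref{eq:2017-05-06:01} are used.

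Next I would realize the approximants. Since $f\in L^1(G,L^{p,q}_{\mathcal D}(E))$ it is strongly measurable, so there are simple functions $s_n=\sum_i\mathbf 1_{A_i^n}g_i^n$ (with $A_i^n\in\mathcal G$ disjoint and $g_i^n\in L^{p,q}_{\mathcal D}(E)$) such that $s_n\to f$ in $L^1(G,L^{p,q}_{\mathcal D}(E))$ and, along a subsequence, $s_n(y)\to f(y)$ in $L^{p,q}_{\mathcal D}(E)$ for $\mu$-a.e.\ $y$. Choosing $\mathcal D$-measurable representatives $\hat g_i^n$ of the classes $g_i^n$ and setting $\tilde f_n((x_1,x_2),y)\coloneqq\sum_i\mathbf 1_{A_i^n}(y)\,\hat g_i^n(x_1,x_2)$ produces $\mathcal D\otimes\mathcal G$-measurable functions that are simple as maps $G\to L^{p,q}_{\mathcal D}(E)$; this gives \eqref{2015-09-14:06}, \eqref{2015-09-14:03}, \eqref{2015-09-14:04}, and, via the Minkowski bound, \eqref{2015-09-14:08}, \eqref{2015-09-14:09}. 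The embedding of the first paragraph then converts the $L^1(G,L^{p,q}_{\mathcal D}(E))$-Cauchy property of $\{s_n\}$ into an $L^1(D\times G,E)$-Cauchy property of $\{\tilde f_n\}$, since by Tonelli $\|\tilde f_n-\tilde f_m\|_{L^1(D\times G,E)}=\int_G|s_n(y)-s_m(y)|_{L^1(D,E)}\,\mu(dy)\le C(p,q)\int_G|s_n(y)-s_m(y)|_{p,q}\,\mu(dy)$. I would then define $\tilde f$ as the pointwise limit of a subsequence of $\{\tilde f_n\}$ on the $\nu\otimes\mu$-full set where this limit exists, and $0$ elsewhere, obtaining the $\mathcal D\otimes\mathcal G$-measurable function of \eqref{2015-09-14:05}.

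It remains to identify $\tilde f$ and verify the convergences. On the common full-measure set of $y$ where $s_n(y)\to f(y)$ in $L^{p,q}_{\mathcal D}(E)$ (hence in $L^1(D,E)$) and, along the subsequence, $\tilde f_n(\cdot,y)\to\tilde f(\cdot,y)$ in $L^1(D,E)$, uniqueness of $L^1$-limits forces $\tilde f(\cdot,y)=f(y)$ in $L^{p,q}_{\mathcal D}(E)$, which is \eqref{2015-08-14:15}; measurability of $y\mapsto\tilde f(\cdot,y)$ into $L^{p,q}_{\mathcal D}(E)$ in \eqref{2015-08-14:22} follows because this map agrees $\mu$-a.e.\ with $f$. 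Convergence \eqref{2015-08-15:06} is just $s_n\to f$ in $L^1(G,L^{p,q}_{\mathcal D}(E))$ rewritten through \eqref{2015-08-14:15}, and \eqref{2015-08-15:10} follows from it by the Minkowski bound applied to $h=\tilde f_n-\tilde f$; the same bound applied to $h=\tilde f$ gives the finiteness required in \eqref{2015-09-14:10}, while measurability of $x\mapsto\tilde f(x,\cdot)$ into $L^1(G,E)$ comes from the joint measurability of $\tilde f$. The delicate point, and the step I expect to cost the most care, is upgrading these almost-everywhere statements to the pointwise \emph{for all} assertions \eqref{2015-08-14:21} and \eqref{2015-08-14:16} without breaking joint measurability or any already-established property: one redefines $\tilde f$ to be $0$ first on $D\times N_1$, where $N_1\in\mathcal G$ is the $\mu$-null set of exceptional $y$, and then on $N_2\times G$, where $N_2\in\mathcal D$ is the $\nu$-null set of $x$ with $\tilde f(x,\cdot)\notin L^1(G,E)$. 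Since $N_1$ is $\mu$-null and $N_2$ is $\nu$-null, each surgery alters the relevant slices only on a null set, so it preserves measurability, the membership of every single slice in the correct space, and all the integral identities above; checking that these two modifications are mutually compatible is the principal bookkeeping obstacle.
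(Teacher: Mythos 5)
Your proposal is correct and follows essentially the same route as the paper's proof: simple-function approximation of $f$ with fixed $E$-valued representatives, the H\"older bound $|\cdot|_{L^1(D,E)}\le C(p,q)\,|\cdot|_{p,q}$ to make $\{\tilde f_n\}$ Cauchy in $L^1(D\times G,E)$, extraction of an a.e.\ limit, two applications of Minkowski's integral inequality, and successive redefinitions on null sets of the form $D\times N_1$ and $N_2\times G$. One ordering caveat: since $(G,\mathcal{G},\mu)$ is not assumed complete, measurability of $y\mapsto\tilde f(\cdot,y)$ cannot be deduced from $\mu$-a.e.\ agreement with $f$ alone — you must first perform the surgery on $D\times N_1$ with $N_1\in\mathcal{G}$, after which the map equals $\mathbf{1}_{G\setminus N_1}(y)f(y)$ \emph{everywhere} and is therefore measurable (this is also how the paper argues, via the measurable simple maps $y\mapsto\tilde f_n(\cdot,y)$), and similarly the "uniqueness of $L^1$-limits" step needs either a further subsequence or the $\nu$-a.e.\ pointwise convergence of slices given by Fubini.
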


\begin{proof}
Since $f$ is Bochner integrable,  
without loss of generality 
we can assume 
that $f(G)$ is separable.
Then there exists 
a  sequence $\{f_n\}_{n\in \mathbb{N}}$ of $L^{p,q}_\mathcal{D}(E)$-valued simple
  functions such that 
  \begin{align}
    &\lim_{n\rightarrow \infty}f_n(y)=f(y)\ 
\mbox{in\ }L^{p,q}_{\mathcal{D}}(E),\ 
 \forall y\in G
    ,\label{2015-08-14:01}\\
    &\lim_{n\rightarrow \infty}
|f_n-f|_{L^1 \left( G,L^{p,q}_\mathcal{D}(E) \right) }=0\label{2015-08-14:11}.
  \end{align}
Each $f_n$ can be written in the form
\begin{equation}
  \label{eq:2015-09-14:00}
  f_n(y)=\sum_{i=1}^{M(n)}\mathbf{1}_{A^{n}_i}(y)\varphi_{n,i}\qquad \forall y\in G,
\end{equation}
where $M(n)\in \mathbb{N}$, $A^n_i\in \mathcal{G}$, and $\varphi_{n,i}$
is a fixed representant of its equivalence class
in $L^{p,q}_\mathcal{D}(E)$.
  For
$n\in \mathbb{N}$, define
\[
\tilde f_n\colon  \left( D\times G,
\mathcal{D} \otimes \mathcal{G} \right) \rightarrow E,\ (x
,y)\mapsto f_n(y)(x).
\]
By using  \eqref{eq:2015-09-14:00}, we have
the measurability of \eqref{2015-09-14:06},  and 
\eqref{2015-09-14:03},
\eqref{2015-09-14:04},
\eqref{2015-09-14:08},
\eqref{2015-09-14:09}, are immediately verified.

We claim that
the sequence $\{\tilde f_n\}_{n\in
  \mathbb{N}}$ is Cauchy in $L^1\left( D\times G,
E\right)$.
Indeed, since  $\varphi_{n,i}\in L^{p,q}_\mathcal{D}(E)$,
 we have
 $\tilde f_n\in L^1(D\times G,E)$, for every $n\in
\mathbb{N}$. Moreover, by H\"older's inequality, 
\[
\begin{split}
  \int_{D\times G}
&|\tilde f_n-\tilde f_m|_E  d(\nu \otimes \mu)=\\
&=
  \int_G \left(\int_{D_2} \left( \int_{D_1} |\tilde f_n((x_1,x_2),y)-\tilde f_m((x_1,x_2),y)|_E  \nu_1(dx_1,x_2)\right) \nu_2(dx_2)\right)\mu(dy)\\
  &\leq 
C(p,q)
\int_G 
 \left( \int_{D_2}
\left( \int_{D_1}
    |\tilde f_n((x_1,x_2),y)-\tilde f_m((x_1,x_2),y)|_E^p \nu_1(dx_1,x_2)\right)^{q/p} \nu_2(dx_2)\right) ^{1/q}\mu(dy)\\
&  = 
C(p,q) |f_n-f_m|_{ L^1\left(G,L^{p,q}_\mathcal{D}
(E)\right) },
\end{split}
\]
and the last member tends to $0$ as $n$ and $m$ tend to $\infty$, by
\eqref{2015-08-14:11}.  Then there exists
$\tilde f\in L^1(D\times G,E)$ such that, after replacing $\{\tilde
f_n\}_{n\in \mathbb{N}}$ by a subsequence if necessary,
\begin{align}
  & \lim_{n\rightarrow \infty}\tilde f_n(x,y)=\tilde
  f(x,y)
   &&\forall (x,y)\in (D\times G)\setminus  N \label{2015-08-14:02}\\
  & \lim_{n\rightarrow \infty}\tilde f_n=\tilde f
    &&\mbox{in }L^1(D\times G,E),\ \ \ \label{2015-08-14:03}
\end{align}
where $ N$ is a $\nu \otimes  \mu$-null set.
We redefine $\tilde f$ on $N$ by $\tilde f(x,y)\coloneqq 0$ for
$(x,y)\in  N$.
After such a redefinition, the partial results of the theorem  till now proved
 still hold true.

By \eqref{2015-08-14:02}, 
since we can assume that each $\varphi_{n,i}$ has separable range,
we see that the range of $\tilde f$ is separable.
By measurability of sections of real-valued measurable functions and by Pettis's measurability theorem (use the fact that the range of $\tilde f$ is separable and then use Hahn-Banach theorem to extend continuous linear functionals on the space generated by the range of $\tilde f$ to the whole space $E$),
 we have that
\[
 \left( D,\mathcal{D}\right) \rightarrow E,\ x\mapsto \tilde f(x,y')\ \ \ \mbox{and}\ \ \ 
 \left( G,\mathcal{G} \right) \rightarrow E, \ y\mapsto \tilde f(x',y)
\]
are measurable, for all $y'\in G$ and all $x'\in D$. 
Since
\begin{multline}
  \label{eq:2015-08-15:14}
\int_G \liminf_{m\rightarrow \infty}\left|\tilde f(\cdot,y)-\tilde f_m(\cdot,y)\right|_{p,q}\mu(dy)\\
\leq \lim_{m\rightarrow \infty}\int_G
\left(
\int_{D_2}
 \left( 
\int_{D_1}
\left|\tilde f((x_1,x_2),y)-\tilde f_m((x_1,x_2),y)\right|^p_E
\nu_1(dx_1,x_2)
\right)^{q/p}
\nu_2(dx_2)
 \right)^{1/q}
\mu(dy)\\
\leq
  \lim_{m\rightarrow\infty}\liminf_{n\rightarrow \infty}
|f_n-f_m|_{ L^1\left(G,L^{p,q}_\mathcal{D}(E)\right) }
=0,
\end{multline}
we have
\begin{equation}
  \label{eq:2015-09-14:07}
\liminf_{m\rightarrow \infty}\left|\tilde f(\cdot,y)-\tilde f_m(\cdot,y)\right|_{p,q}=0\qquad\mu\mbox{-a.e.\ }y\in G.
\end{equation}
By recalling that $\tilde f_n(\cdot,y)\in L^{p,q}_\mathcal{D}(E)$ for all $y\in G$,
\eqref{eq:2015-09-14:07} shows that the map 
$$
D\rightarrow E,\ x\mapsto \tilde f(x,y)
$$ belongs to
$L^{p,q}_\mathcal{D}(E)$
for all $y\in G\setminus N'$, where $N'$ is a $\mu$-null set.
We redefine $\tilde f$ on $ N'$ by $\tilde f(x,y)\coloneqq 0$ for
$(x,y)\in  D\times N'$.
Again, we notice
that
the partial results of the theorem till now proved still hold  true after the redefinition on $D\times N_1$.
In addition,
$$
\forall y\in G,\ \mbox{the map\ }
D\rightarrow E,\ x\mapsto
 \tilde f(x,y),\ \mbox{belongs to\ } L^{p,q}_\mathcal{D}(E).
$$
This provides \eqref{2015-08-14:21}.
Moreover, 
since $N'$ can be chosen such that 
\eqref{eq:2015-09-14:07} holds for all $y\in G\setminus N'$
and since $G\rightarrow L^{p,q}_{\mathcal{D}}(E),\ y \mapsto \tilde f_n(\cdot,y)=f_n(y)$, is measurable, for all $n\in \mathbb{N}$,
also 
\eqref{2015-08-14:22} is proved.
From
the last inequality of
 \eqref{eq:2015-08-15:14}, 
  \eqref{2015-08-15:06} follows.
From
\eqref{2015-08-14:01}
and 
\eqref{eq:2015-09-14:07},
 \eqref{2015-08-14:15} follows as well. 

By H\"older's inequality, 
we have $|\tilde f|_{L^1(D\times G,E)}\leq  C(p,q)|\tilde f|_{L^1(G,L^{p,q}_\mathcal{D}(E))}<\infty$.
Then, 
after
 redefining
 $\tilde f$ on a  set $ N''\times G$, where $N''$ is a 
$\nu$-null set, 
 by $\tilde f(x,y)\coloneqq 0$ for
$(x,y)\in N''\times G$, 
we have
$$
\ \forall x\in D,\ \mbox{the map\ }
G\rightarrow E, \ y\mapsto \tilde f(x,y), \mbox{ belongs to }   L^1(G,E).
$$
This provides \eqref{2015-08-14:16}.
By applying Minkowski's inequality for integrals twice
(see \cite[p.\ 194, 6.19]{Folland1999}), we have
\begin{equation*}
  \begin{split}
    \lim_{n\rightarrow \infty}&
 \left( 
    \int_{D_2}
    \left(
      \int_{D_1}
      \left(
        \int_G
        |\tilde f((x_1,x_2),y)-\tilde f_n((x_1,x_2),y)|_E\mu
        (dy)\right)^p
      \nu_1(dx_1,x_2)\right)^{q/p}
    \nu_2(dx_2)
 \right) ^{1/q}
    \\
&\leq
    \lim_{n\rightarrow \infty}
 \left( 
    \int_{D_2}
    \left(
      \int_G
      \left(
        \int_{D_1}
        |\tilde f((x_1,x_2),y)-\tilde f_n((x_1,x_2),y)|^p_E\nu_1        (dx_1,x_2)\right)^{1/p}
      \mu(dy)\right)^{q}
    \nu_2(dx_2)
 \right) ^{1/q}
    \\
&\leq
    \lim_{n\rightarrow \infty}
    \int_G
    \left(
      \int_{D_2}
      \left(
        \int_{D_1}
        |\tilde f((x_1,x_2),y)-\tilde f_n((x_1,x_2),y)|^p_E\nu_1        (dx_1,x_2)\right)^{q/p}
      \nu_2(dx_2)\right)^{1/q}
    \mu(dy).
\end{split}
\end{equation*}
Since the latter member tends to $0$ because of
the second inequality in
 \eqref{eq:2015-08-15:14}, 
the estimate above
provides
\eqref{2015-09-14:10} and  \eqref{2015-08-15:10},
after  redefining $\tilde f$ 
 on a set $N'''\times G$,
where $N'''$ is a
 suitably chosen  $\nu$-null set,
 by $\tilde f(x,y)\coloneqq 0$ for
$(x,y)\in   N'''\times G$.
\end{proof}

Let 
$T>0$
 and let $\mathcal{B}_T$  be a short notation for the Borel $\sigma$-algebra   $\mathcal{B}_{[0,T]}$
on $[0,T]$.
We recall that, if $\mathcal{T}$ is a topological space, then $\mathcal{B}_{\mathcal{T}}$ denotes the Borel $\sigma$-algebra of $\mathcal{T}$ (\footnote{No topological space will be denoted by $T$, hence there will not be any confusion with $\mathcal{B}_T$.}).
Let 
 $\left(\Omega,\mathcal{F},\mathbb{F}\coloneqq \{\mathcal{F}_t\}_{t\in[0,T]},\mathbb{P}\right)$ 
be a complete filtered probability space.
We endow the product space $\Omega_T\coloneqq \Omega \times [0,T]$  with the $\sigma$-algebra ${\mathcal{P}_T}$ of predictables sets associated to the filtration $\mathbb{F}$
 and the measurable space $ \left( \Omega_T,{\mathcal{P}_T} \right)$ 
with the product measure $ \mathbb{P} \otimes  m$, where $m$ denotes the Lebesgue's measure.
We need to introduce some further notation.
  \begin{itemize}
\itemsep=-1mm
\item 
\vskip-5pt
$F$ is a Banach space;
  \item $\mathbb{T}\subset B_b([0,T],F)$ is a closed subspace (with respect to the norm $|\cdot|_\infty$) such that
    \begin{equation}
      \label{eq:2017-05-05:01}
      \mathbb{T}\times  [0,T]\rightarrow F,\ (\mathbf{x},t) \mapsto \mathbf{x}(t);
    \end{equation}
is Borel measurable, when $\mathbb{T}\times [0,T]$  is endowed with the product $\sigma$-algebra $\mathcal{B}_\mathbb{T} \otimes \mathcal{B}_{[0,T]}$ (and not just with the Borel $\sigma$-algebra of the product topology!).
\item 
  $\mathcal{P}'$ is a given sub-$\sigma$-algebra of $\mathcal{F}_T \otimes \mathcal{B}_T$ such that, for all $A\in \mathcal{F}_T$ with $\mathbb{P}(A)=0$, $A\times [0,T]\in \mathcal{P}'$.
\item 
 $\mathcal{L}^0_{\mathcal{P}'}(\mathbb{T})$
is  the vector space of measurable functions
$$
X\colon (\Omega_T,\mathcal{P}')\rightarrow F
$$ 
such that, for $\mathbb{P}$-a.e.\ $\omega\in \Omega$, the path
$$
X(\omega)\colon [0,T]\rightarrow  F,\ t \mapsto X_t(\omega)
$$
belongs to $\mathbb{T}$, and the $\mathbb{P}$-a.e.\ defined 
map
\begin{equation}
  \label{eq:2017-05-05:00}
  (\Omega,\mathcal{F}_T)\rightarrow \mathbb{T},\ \omega \mapsto X(\omega)
\end{equation}
is measurable, when $\mathbb{T}$ is endowed with the Borel $\sigma$-algebra induced by the norm $|\cdot|_\infty$.
\item \label{not:2017-05-07:08}
For $r\in[1,\infty)$,  $\mathcal{L}^r_{\mathcal{P}'}(\mathbb{T})$ denotes the space of (equivalence classes of)
 $X\in \mathcal{L}^0_{\mathcal{P}'}(\mathbb{T})$ such that
\eqref{eq:2017-05-05:00} has separable range and
$$
|X|
_{
  \mathcal{L}^r
  _{\mathcal{P}'}
  (\mathbb{T})
}
\coloneqq  \left( \mathbb{E} \left[ |X|^r_\infty \right] \right) ^{1/r} <\infty.
$$
Then
$(\mathcal{L}^r_{\mathcal{P}'}(\mathbb{T}),
|\cdot|_{\mathcal{L}^r_{\mathcal{P}'}(\mathbb{T})})$ is a Banach space.
\end{itemize}

\begin{remark}
  The space $\mathbb{T}$ can be e.g.\ $C_b([0,T],F)$, because in such a case \eqref{eq:2017-05-05:01} is continuous, hence measurable.
This permits also to consider $\mathbb{T}$ as the space of left-limited right-continuous functions, because,
if $\varphi$ is real valued and continuous with support $[0,1]$ and if
 $\varphi_\epsilon(t)=\epsilon^{-1}\varphi(\epsilon^{-1}t)$,
then 
$\varphi_\epsilon* \mathbf{x}$ converges pointwise to $\mathbf{x}$ everywhere on $[0,T]$ as
\mbox{$\epsilon\rightarrow 0^+$}, after extending
 $\mathbf{x}$  by continuity beyond $T$.
We finally  observe that \eqref{eq:2017-05-05:01} is measurable whenever $\mathbb{T}$ is separable:
this comes from a straightforward application 
of \cite[Lemma 4.51]{Aliprantis2006}.
\end{remark}

We now provide
 the main result of this section.

\begin{theorem}[Stochastic Fubini's theorem] 
\label{2015-09-21:01}
Let $p,q,r\in[1,\infty)$, $g\in L^1(G,L^{p,q}_{\mathcal{D}}(E))$.
Let 
\begin{equation*}
   \mathfrak L\colon L^{p,q}_{\mathcal{D}} \left(E \right)
  \rightarrow 
\mathcal{L}^r_{\mathcal{P}'}(\mathbb{T})
\qquad
\end{equation*}
be a linear and continuous operator.
Then there exist measurable functions
\begin{gather*}
  X_1\colon  \left( D\times G,  \mathcal{D} \otimes \mathcal{G} \right) \rightarrow E\\
  X_2\colon \left(  \Omega_T\times G , 
 \left(  \mathcal{F}_T \otimes \mathcal{B}_T \right)  \otimes \mathcal{G}  
 \right) \rightarrow F
\end{gather*}
such that
\begin{subequations}
  \begin{equation*}
     X_1(x,\cdot)\in L^1(G,E),\ \forall x\in D, \mbox{ and }
  X_2((\omega,t),\cdot)\in L^1(G,F),\ 
  \forall (\omega,t)\in \Omega_T
\end{equation*}
\vskip-24pt
\begin{equation*}
    D\rightarrow L^1(G,E), \ x\mapsto
  X_1(x,\cdot),\ \in
  L^{p,q}_{\mathcal{D}}(L^1(G,E))
\end{equation*}
\begin{equation*}
  \left(   \Omega_T,\mathcal{P}_T  \right)
\rightarrow L^1(G,F), \ (\omega,t)\mapsto
  X_2((\omega,t),\cdot),
  \mbox{ is measurable}
\end{equation*}
\begin{equation*}
  X_1(\cdot,y)\in L^{p,q}_{\mathcal{D}}(E),\ \forall y\in G
\end{equation*}
\begin{equation*}
  G\rightarrow L^{p,q}_{\mathcal{D}}(E),\ y \mapsto X_1(\cdot,y),\ \in L^1(G,L^{p,q}_{\mathcal{D}}(E))
\end{equation*}
\begin{equation*}
    X_1(\cdot,y)=g(y)
  \mbox{ in }L_{\mathcal{D}}^{p,q}(E)\mbox{ for } \mu\mbox{-a.e.\ }y\in G
\end{equation*}
\begin{equation*}
  X_2(\cdot,y)\in 
\mathcal{L}^r_{\mathcal{P}'}(\mathbb{T}),
\ \forall y\in G
\end{equation*}
\vskip-18pt
\begin{equation}\label{2017-05-07:00}
  X_2(\cdot,y)=\mathfrak Lg(y)\mbox{ in }
\mathcal{L}^r_{\mathcal{P}'}(\mathbb{T}),\
  \mu\mbox{-a.e.\ }y\in G
\end{equation}
\end{subequations}
and such that 
\begin{equation}
  \begin{split}
      \label{eq:2015-09-15:00}
\mbox{for $\mathbb{P}$-a.e.\ $\omega\in\Omega$,\ }
(\mathfrak LY)(\omega,t)&=\int_G X_2((\omega,t),y)\mu(dy),\ \forall t\in[0,T],
\end{split}
\end{equation}
where
\[
Y(x)\coloneqq \int_GX_1(x,y)\mu(dy),\ \forall
x\in D.
\]
\end{theorem}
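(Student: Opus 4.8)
The plan is to apply the preparatory Lemma~\ref{2015-08-15:00} to $f=g$ and then push the resulting approximation through $\mathfrak{L}$. First I would invoke Lemma~\ref{2015-08-15:00} to produce the jointly measurable $\tilde f$ together with the simple approximants $\tilde f_n$ satisfying \eqref{2015-08-14:21}--\eqref{2015-08-15:10}, and set $X_1:=\tilde f$. Every assertion on $X_1$ in the statement is then read off directly from the lemma: membership and measurability of the sections from \eqref{2015-08-14:21}, \eqref{2015-08-14:22}, \eqref{2015-08-14:16} and \eqref{2015-09-14:10}, and $X_1(\cdot,y)=g(y)$ in $L^{p,q}_{\mathcal{D}}(E)$ for $\mu$-a.e.\ $y$ from \eqref{2015-08-14:15} (the $L^1(G,\cdot)$-integrability of $y\mapsto X_1(\cdot,y)$ being inherited from $g$). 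In particular $x\mapsto X_1(x,\cdot)$ belongs to $L^{p,q}_{\mathcal{D}}(L^1(G,E))$, so composing with the contraction $\int_G\colon L^1(G,E)\to E$ shows that $Y=\int_G X_1(\cdot,y)\,\mu(dy)$ lies in $L^{p,q}_{\mathcal{D}}(E)$ and that $\mathfrak{L}Y$ is well defined.

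Next I would construct $X_2$ from the simple layer. Writing $\tilde f_n(\cdot,y)=\sum_{i}\mathbf{1}_{A^n_i}(y)\,\varphi_{n,i}$ as in \eqref{eq:2015-09-14:00}, I pick a $\mathcal{P}'$-measurable representative of each $\mathfrak{L}\varphi_{n,i}\in\mathcal{L}^r_{\mathcal{P}'}(\mathbb{T})$ and set $X_2^n((\omega,t),y):=\sum_i\mathbf{1}_{A^n_i}(y)\,(\mathfrak{L}\varphi_{n,i})(\omega,t)$, which is jointly $(\mathcal{F}_T\otimes\mathcal{B}_T)\otimes\mathcal{G}$-measurable since $\mathcal{P}'\subseteq\mathcal{F}_T\otimes\mathcal{B}_T$, and which equals $\mathfrak{L}(\tilde f_n(\cdot,y))$ in $\mathcal{L}^r_{\mathcal{P}'}(\mathbb{T})$ for each $y$. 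The key elementary estimate is that every $W\in\mathcal{L}^r_{\mathcal{P}'}(\mathbb{T})$ satisfies $\int_{\Omega_T}|W|_F\,d(\mathbb{P}\otimes m)\le T\,\mathbb{E}[|W|_\infty]\le T\,|W|_{\mathcal{L}^r_{\mathcal{P}'}(\mathbb{T})}$; applying it to $W=\mathfrak{L}(\tilde f_n(\cdot,y)-\tilde f_m(\cdot,y))$, integrating in $y$ and using the continuity of $\mathfrak{L}$ together with \eqref{2015-08-15:06} shows that $\{X_2^n\}$ is Cauchy in $L^1(\Omega_T\times G,F)$. I then define $X_2$ as its $(\mathbb{P}\otimes m)\otimes\mu$-a.e.\ limit and, exactly as in the proof of Lemma~\ref{2015-08-15:00}, redefine it to $0$ on the relevant null sets so that $X_2((\omega,t),\cdot)\in L^1(G,F)$ for all $(\omega,t)$, the section $(\omega,t)\mapsto X_2((\omega,t),\cdot)$ is measurable, and $X_2(\cdot,y)\in\mathcal{L}^r_{\mathcal{P}'}(\mathbb{T})$ for every $y$. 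Matching this $L^1$-limit with the $\mathcal{L}^r_{\mathcal{P}'}(\mathbb{T})$-limit $\mathfrak{L}(\tilde f_n(\cdot,y))\to\mathfrak{L}(\tilde f(\cdot,y))=\mathfrak{L}g(y)$ (the last equality holding for $\mu$-a.e.\ $y$ by \eqref{2015-08-14:15}) yields \eqref{2017-05-07:00}.

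It remains to prove \eqref{eq:2015-09-15:00}. On the simple layer it is pure linearity: $Y_n:=\int_G\tilde f_n(\cdot,y)\,\mu(dy)=\sum_i\mu(A^n_i)\,\varphi_{n,i}$, whence $\mathfrak{L}Y_n=\sum_i\mu(A^n_i)\,\mathfrak{L}\varphi_{n,i}=\int_G X_2^n(\cdot,y)\,\mu(dy)$, an identity in $\mathcal{L}^r_{\mathcal{P}'}(\mathbb{T})$ between elements whose paths lie in $\mathbb{T}$. To pass to the limit I would use \eqref{2015-08-15:10} and the contraction $\int_G\colon L^1(G,E)\to E$ to obtain $Y_n\to Y$ in $L^{p,q}_{\mathcal{D}}(E)$, hence $\mathfrak{L}Y_n\to\mathfrak{L}Y$ in $\mathcal{L}^r_{\mathcal{P}'}(\mathbb{T})$; along a subsequence this gives $\mathfrak{L}Y_n(\omega)\to\mathfrak{L}Y(\omega)$ in $\mathbb{T}$ (uniformly in $t$) for $\mathbb{P}$-a.e.\ $\omega$. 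On the other hand, the same continuity estimate gives $\int_G|X_2^n(\cdot,y)-X_2(\cdot,y)|_{\mathcal{L}^r_{\mathcal{P}'}(\mathbb{T})}\,\mu(dy)\to0$, and since $|\int_G(X_2^n-X_2)(\cdot,y)\,\mu(dy)|_\infty\le\int_G|(X_2^n-X_2)(\cdot,y)|_\infty\,\mu(dy)$, taking expectations and a further subsequence yields $\int_G X_2^n((\omega,\cdot),y)\,\mu(dy)\to\int_G X_2((\omega,\cdot),y)\,\mu(dy)$ in $\mathbb{T}$ for $\mathbb{P}$-a.e.\ $\omega$.

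Finally I would identify the right-hand side of \eqref{eq:2015-09-15:00} as a genuine Bochner integral. Since $\int_G|X_2(\cdot,y)|_{\mathcal{L}^r_{\mathcal{P}'}(\mathbb{T})}\,\mu(dy)\le\|\mathfrak{L}\|\,|g|_{L^1(G,L^{p,q}_{\mathcal{D}}(E))}<\infty$, for $\mathbb{P}$-a.e.\ $\omega$ the map $y\mapsto X_2((\omega,\cdot),y)$ is Bochner integrable as a $\mathbb{T}$-valued map, so $Z(\omega):=\int_G X_2((\omega,\cdot),y)\,\mu(dy)\in\mathbb{T}$ because $\mathbb{T}$ is a closed subspace of $B_b([0,T],F)$; moreover evaluation at a fixed $t$ is a bounded linear map $\mathbb{T}\to F$ — this is exactly what the admissibility condition \eqref{eq:2017-05-05:01} guarantees — so it commutes with the integral and $Z(\omega)(t)=\int_G X_2((\omega,t),y)\,\mu(dy)$ for every $t$. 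As the simple-layer identity reads $\int_G X_2^n(\cdot,y)\,\mu(dy)=\mathfrak{L}Y_n$, the two convergences of the previous paragraph force $\mathfrak{L}Y(\omega)=Z(\omega)$ in $\mathbb{T}$ for $\mathbb{P}$-a.e.\ $\omega$, which is precisely \eqref{eq:2015-09-15:00}. I expect the main obstacle to be exactly this last upgrade: passing from the $\mathbb{P}\otimes m$-a.e.\ equality of the two processes to pathwise equality for \emph{every} $t$ is not automatic for paths in a general $\mathbb{T}\subset B_b([0,T],F)$, and is forced here only by realizing the $G$-integral inside $\mathbb{T}$ and comparing in the $|\cdot|_\infty$-topology rather than in $L^1(\Omega_T,F)$.
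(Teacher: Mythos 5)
Your treatment of $X_1$ and of the simple layer coincides with the paper's, and your closing remark correctly locates the crux; but the construction you give for $X_2$ does not deliver what your own later steps need. You define $X_2$ as an a.e.\ limit of the $X_2^n$ in $L^1(\Omega_T\times G,F)$, after the bound $\int_{\Omega_T}|W|_F\,d(\mathbb{P}\otimes m)\le T\,\mathbb{E}[|W|_\infty]$, which throws away the supremum over $t$ and with it all path-space structure. Membership in $\mathcal{L}^r_{\mathcal{P}'}(\mathbb{T})$ is \emph{not} an almost-everywhere property of functions on $\Omega_T$: it requires that for $\mathbb{P}$-a.e.\ $\omega$ the entire path $t\mapsto X(\omega,t)$ lie in $\mathbb{T}$ and that $\omega\mapsto X(\omega)$ be measurable into $(\mathbb{T},|\cdot|_\infty)$. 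A pointwise a.e.\ limit, set equal to $0$ where the limit fails to exist, will in general have paths outside $\mathbb{T}$ (think of $\mathbb{T}=C_b([0,T],F)$: a.e.\ pointwise convergence of continuous paths does not produce a continuous limit, and altering a path on a null set of times destroys continuity), and no redefinition on null sets can repair this ``exactly as in the proof of Lemma~\ref{2015-08-15:00}'', because there every property being repaired is itself an a.e.\ property. Consequently two later steps are unsupported: the claim $\int_G|X_2^n(\cdot,y)-X_2(\cdot,y)|_{\mathcal{L}^r_{\mathcal{P}'}(\mathbb{T})}\,\mu(dy)\to0$ presupposes both that $X_2(\cdot,y)\in\mathcal{L}^r_{\mathcal{P}'}(\mathbb{T})$ and that $X_2(\cdot,y)$ is the $\mathcal{L}^r$-limit of $X_2^n(\cdot,y)$, neither of which follows from your construction; and the Bochner integrability of $y\mapsto X_2((\omega,\cdot),y)$ as a $\mathbb{T}$-valued map requires $(\omega,y)\mapsto X_2((\omega,\cdot),y)$ to be measurable into $\mathbb{T}$, which does not follow from joint measurability of the $F$-valued map $((\omega,t),y)\mapsto X_2((\omega,t),y)$: the supremum over the uncountable family of $t$'s need not even be a measurable function of $(\omega,y)$.

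The repair is to run your Cauchy argument one level up, keeping the sup inside. The maps $(\omega,y)\mapsto X_2^n((\omega,\cdot),y)$ are (up to a fixed $\mathbb{P}$-null set) $\mathcal{F}_T\otimes\mathcal{G}$-measurable with values in $(\mathbb{T},|\cdot|_\infty)$, and since $\mathbb{P}$ is a probability measure, $\int_G\mathbb{E}\bigl[|X_2^n(\cdot,y)-X_2^m(\cdot,y)|_\infty\bigr]\mu(dy)\le\int_G\bigl|\mathfrak L(\tilde f_n(\cdot,y))-\mathfrak L(\tilde f_m(\cdot,y))\bigr|_{\mathcal{L}^r_{\mathcal{P}'}(\mathbb{T})}\mu(dy)\to0$ by \eqref{2015-08-15:06} and continuity of $\mathfrak L$, so this sequence is Cauchy in $L^1(\Omega\times G,\mathbb{T})$. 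Taking the limit in \emph{this} space, and a.e.\ pointwise in $\mathbb{T}$-norm along a subsequence, yields a jointly measurable $\mathbb{T}$-valued function $g(\omega,y)$ whose paths lie in $\mathbb{T}$ because $\mathbb{T}$ is closed under uniform convergence; setting $X_2((\omega,t),y)\coloneqq g(\omega,y)(t)$, the joint measurability of the $F$-valued $X_2$ follows from \eqref{eq:2017-05-05:01}, the $y$-sections do lie in $\mathcal{L}^r_{\mathcal{P}'}(\mathbb{T})$, and your third and fourth paragraphs then go through. This is precisely what the paper does, in packaged form: it applies Lemma~\ref{2015-08-15:00} a second time, to the map $y\mapsto\mathfrak L(\tilde f(\cdot,y))$ regarded as an element of $L^1(G,L^r(\Omega,\mathbb{T}))$ (using that $\mathcal{L}^r_{\mathcal{P}'}(\mathbb{T})$ is a closed subspace of $L^r(\Omega,\mathbb{T})$), obtaining the measurable function \eqref{eq:2017-05-05:02} and defining $X_2$ by evaluation. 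One small additional correction: commuting evaluation at a fixed $t$ with the $\mathbb{T}$-valued Bochner integral only uses that evaluation is a contraction on $(\mathbb{T},|\cdot|_\infty)$, which is automatic; the standing assumption \eqref{eq:2017-05-05:01} is needed, rather, for the joint measurability of $X_2$ as an $F$-valued function.
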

\begin{proof}
 By 
 Lemma~\ref{2015-08-15:00} ,
there exist measurable functions
  \begin{gather*}
    \tilde f\colon  \left(  D\times G, \mathcal{D} \otimes \mathcal{G}\right)\rightarrow E,\\
    \tilde  f _n\colon \left( D\times G, \mathcal{D} \otimes \mathcal{G}\right)
\rightarrow E,\qquad n\in \mathbb{N}
  \end{gather*}
satisfying  \eqref{2015-08-14:16}--\eqref{2015-08-15:10}.
For $n\in \mathbb{N}$,  $\tilde  f _n$ has the form
$$
\tilde  f _n(x,y)=\sum_{i=1}^{M(n)}\mathbf{1}_{A^n_i}(y)\varphi_{n,i}(x)\qquad\forall x\in  D,\ 
\forall y\in G,
$$
where $\varphi_{n,i}$ is a fixed representant of its class in $L^{p,q}_{\mathcal{D}}(E)$.
For all $n\in \mathbb{N}$, the function $\tilde f_n^{(\mu)}$ defined by 
$$
\tilde  f _n^{(\mu)}\colon D\rightarrow E\ x\mapsto \int_G\tilde  f _n(x,y)\mu(dy)=\sum_{i=1}^{M(n)}\varphi_{n,i}(x
)\mu(A^n_i)
$$
belongs to $L^{p,q}_{\mathcal{D}}(E)$. 
Then, if we define
$$
\tilde  f ^{(\mu)}\colon D\rightarrow E,\ x\mapsto\int_G\tilde  f (x,y)\mu(dy),
$$
due to \eqref{2015-08-15:10}, we have
\begin{equation}
  \label{eq:2015-08-15:11}
  \lim_{n\rightarrow\infty}\tilde  f ^{(\mu)}_n=\tilde  f ^{(\mu)}
\  \mbox{in }L^{p,q}_{\mathcal{D}}(E).
\end{equation}
By linearity of $ \mathfrak L$, we have
\begin{equation}
  \label{eq:2015-09-16:00}
\mathfrak L\tilde  f ^{(\mu)}_n=
\sum_{i=1}^{M(n)}\mu(A^n_i)\mathfrak L\varphi_{n,i}  \ \textrm{in}\ \mathcal{L}^r_{\mathcal{P}'}(\mathbb{T}).
\end{equation}
By continuity of  $\mathfrak L$,
\eqref{eq:2015-08-15:11} and \eqref{eq:2015-09-16:00} give
\begin{equation}
  \label{eq:2015-08-15:04}
  \lim_{n\rightarrow\infty}
\sum_{i=1}^{M(n)}\mu(A^n_i)\mathfrak L\varphi_{n,i}=
\mathfrak L \tilde  f ^{(\mu)}
\ \mbox{in }\mathcal{L}^r_{\mathcal{P}'}(\mathbb{T}).
\end{equation}
For $n\in \mathbb{N}$, 
we now consider  the measurable function
\[
\tilde  f ^{(\mathfrak L )}_n\colon  \left(  \Omega_T\times G,{\mathcal{P}'} \otimes \mathcal{G} \right)  \rightarrow F, \ ((\omega,t),y)\mapsto \sum_{i=1}^{M(n)}\mathbf{1}_{A^n_i}(y) \left( \mathfrak L \varphi_{n,i} \right) (\omega,t)
\]
where here $\mathfrak L \varphi_{n,i}$ is a fixed representant of its class in $\mathcal{L}^r_{\mathcal{P}'}(\mathbb{T})$.
For 
all $y\in G$, $\tilde  f ^{(\mathfrak L)}_n(\cdot,y)$ is a representant of the class of $\mathfrak L  \left( \tilde  f _n(\cdot,y) \right) $ in
$\mathcal{L}^r_{\mathcal{P}'}(\mathbb{T})$.
Moreover,
\[
\int_G \tilde  f ^{(\mathfrak L)}_n((\omega,t),y)\mu(dy)=\sum_{i=1}^{M(n)}\mu(A^n_i)\left(\mathfrak L \varphi_{n,i}\right)(\omega,t)\qquad \forall (\omega,t)\in \Omega_T,\ \forall n\in \mathbb{N}.
\]
By \eqref{eq:2015-09-16:00}, we obtain
\begin{equation}
  \label{eq:2015-08-15:03}
\mbox{for $\mathbb{P}$-a.e.\ $\omega\in \Omega$,\ }  \int_G \tilde  f ^{(\mathfrak L )}_n((\omega,t),y)\mu(dy)=
 (\mathfrak L \tilde f _n^{(\mu)} )(\omega,t)
\ \forall t\in[0,T].
\end{equation}
We now show that we can pass to the limit in \eqref{eq:2015-08-15:03}.
By \eqref{2015-08-15:06}, 
\[
\lim_{n\rightarrow \infty}\int_G\left|\tilde  f _n(\cdot,y)-\tilde  f (\cdot,y)\right|_{L^{p,q}_{\mathcal{D}}(E)}\mu(dy)=0,
\]
hence, by continuity of $\mathfrak L $,
\begin{equation}
  \label{eq:2015-08-15:12}
  \lim_{n\rightarrow \infty}\int_G\left| \mathfrak L   \left( \tilde  f _n(\cdot,y) \right) -\mathfrak L  \left( \tilde  f (\cdot,y) \right) \right|_{\mathcal{L}^r_{\mathcal{P}'}(\mathbb{T})}\mu(dy)=0.
\end{equation}
Since $\mathcal{L}^r_{\mathcal{P}'}(\mathbb{T})$ is
a closed subspace of
$$
L^r(\Omega,\mathbb{T})
\coloneqq 
L^r \left( (\Omega,\mathcal{F}_T,\mathbb{P}), 
\left( \mathbb{T}
,|\cdot|_\infty \right)   \right),
$$ 
the map
\begin{equation}
  \label{eq:2015-09-15:07}
  (G,\mathcal{G})\rightarrow L^r(\Omega,\mathbb{T})
,\ y\mapsto \mathfrak L  \left( \tilde  f (\cdot,y) \right) 
\end{equation}
is measurable and integrable (the range of \eqref{eq:2015-09-15:07} is separable). 
By applying Lemma \ref{2015-08-15:00} again, now to 
\eqref{eq:2015-09-15:07},
we have that there exists a measurable function
\begin{equation}
  \label{eq:2017-05-05:02}
  g\colon  \left( \Omega\times G, \mathcal{F}_T \otimes \mathcal{G} \right) \rightarrow \mathbb{T}
\end{equation}
such that,
for some $A\in \mathcal{G}$ with $\mu(A^c)=0$,
\begin{gather}\label{2015-09-15:08}
  g(\cdot,y)=\mathfrak L  \left( \tilde  f (\cdot,y) \right) \ \mbox{in }L^r(\Omega,\mathbb{T}
 ),\ \forall y\in A.
\end{gather}
Define
\begin{equation*}
  X_2((\omega,t),y)\coloneqq 
  \begin{dcases}
    g(\omega,y)(t)
 & \forall ((\omega,t),y)\in  \Omega_T\times A\\
    0 &\textrm{otherwise.}
  \end{dcases}
\end{equation*}
Notice that,  since $\mathfrak  L \left( \tilde f(\cdot,y) \right) $ is $\mathcal{P}'$-measurable for all $y\in G$ (by definition of $ \mathfrak L$) and since $\mathcal{P}'$ contains the sets $N\times [0,T]$ when $N\in \mathcal{F}_T$ and $\mathbb{P}(N)=0$, we have,
by 
\eqref{2015-09-15:08},
 that $X_2(\cdot,y)$ is $\mathcal{P}'$-measurable for all $y\in G$.
Moreover, since the evaluation map \eqref{eq:2017-05-05:01} is assumed to be measurable, by measurability of \eqref{eq:2017-05-05:02}  and by definition of $X_2$
 we have that
$$
X_2\colon  \left(   \Omega_T\times G,   \left( 
 \mathcal{F}_T
\otimes
\mathcal{B}_T 
 \right) \otimes \mathcal{G}   \right)  \rightarrow F
$$
is measurable. 
By \eqref{eq:2015-08-15:12}, we can write
\begin{equation}
  \label{eq:2017-05-11:00}
  \begin{split}
  \lim_{n\rightarrow \infty}\int_\Omega &\left(\int_G
\sup_{t\in [0,T]}\left|\tilde  f _n^{(\mathfrak L )}((\omega,t),y)-
X_2((\omega,t),y)
  \right|_{F} \mu(dy)\right)\mathbb{P}(d\omega)\\
&\leq
  \lim_{n\rightarrow \infty}\int_G
\left(\int_\Omega
\sup_{t\in[0,T]}
\left| \tilde f _n^{(\mathfrak L )}((\omega,t),y)-
X_2((\omega,t),y)\right|_{F}^r\mathbb{P}(d\omega)\right)^{1/r}\mu(dy)\\
&=  \lim_{n\rightarrow \infty}\int_G\left|\mathfrak L  \left( \tilde  f _n(\cdot,y) \right) -\mathfrak L  \left( \tilde  f (\cdot,y) \right) \right|_{
\mathcal{L}_{\mathcal{P}'}^r(\mathbb{T})}\mu(dy)=0,
\end{split}
\end{equation}
where the measurability of 
$|\tilde f_n^{(\mathfrak L )}((\omega,\cdot),y)-X_2(
(\omega,\cdot),y)|_\infty$, jointly in  $(\omega,y)$, is due to the measurability of \eqref{eq:2017-05-05:02}, to the definition of $X_2$, and to the definition of $\tilde f_n^{( \mathfrak L)}$.
By \eqref{eq:2017-05-11:00}, by considering a subsequence if necessary,  it follows that
\begin{equation}
  \label{eq:2015-08-15:13}
  \lim_{n\rightarrow \infty}
\sup_{t\in[0,T]}\left|\int_G \tilde  f ^{(\mathfrak L )}_n((\omega,t),y)\mu(dy)
-
\int_G X_2((\omega,t),y) \mu(dy)\right|_F=0\qquad\mathbb{P}\mbox{-a.e.\ }\omega\in \Omega.
\end{equation}
By
\eqref{eq:2015-09-16:00},
\eqref{eq:2015-08-15:04},
\eqref{eq:2015-08-15:03}, and
\eqref{eq:2015-08-15:13}, we conclude that, 
for $\mathbb{P}$-a.e.\ $\omega\in \Omega$,
$$
    ( \mathfrak L \tilde  f ^{(\mu)}  ) (\omega,t) =\int_G X_2((\omega,t),y) \mu(dy),\ \forall t\in [0,T],
$$
which provides \eqref{eq:2015-09-15:00},
after defining $X_1\coloneqq \tilde  f $.
\end{proof}

\section{Stochastic convolution}\label{2017-05-10:00}



One of the contents of Theorem~\ref{2015-09-21:01} is 
the existence of the jointly measurable function $X_2$, whose sections $X_2(\cdot,y)$ coincide with the ``stochastic integral'' $ \mathfrak L g(y)$, for a.e.\ $y$.
This fact permits to obtain a jointly measurable version of a stochastic convolution, as we will explain
in the present section.

\bigskip
Let us recall/introduce the following notation.
  We consider separable Hilbert spaces $H$ and $U$, with scalar product
 $\langle \cdot,\cdot\rangle_H$ and $\langle \cdot,\cdot\rangle_U$,
respectively.
  \begin{itemize}
\itemsep=-1mm
\item 
 $L_2(U,H)$ denotes the space of Hilbert-Schmidt linear operators from $U$ into $H$.
\end{itemize}
Let $E$ be a Banach space.
\begin{itemize}
\itemsep=-1mm
\item If $E$ is a Banach space,  $\leb^0_{\mathcal{P}_T}(E)$ denotes the space of $E$-valued $ {\mathcal{P}_T}/\mathcal{B}_E $-measurable processes $\Phi\colon \Omega_T\rightarrow E$, for which there exists $N\in \mathcal{P}_T$ with $\mathbb{P} \otimes m(N)=0$ such that $X(\Omega_T\setminus N)$ is separable.
Two
 processes are equal in $L^0_{\mathcal{P}_T}(E)$ if they coincides $ \mathbb{P} \otimes m$-a.e..
The space $L^0_{\mathcal{P}_T}(E)$ is a complete metrizable space when endowed with the  topology induced by the convergence in measure (see \cite[Sec.\ 5.2]{Malliavin1995}).
  \item 
 $\leb^0_{{\mathcal{P}_T} \otimes \mathcal{B}_T}(E)$ denotes the space of (equivalence classes of) $E$-valued $ \mathcal{P}_T \otimes {\mathcal{B}_T}/\mathcal{B}_E $-measurable processes $\zeta\colon
 \Omega_T\times [0,T]\rightarrow E$, with separable range,
up to a modification on a $(\mathbb{P} \otimes m )  \otimes m$-null set if necessary.
Two processes are  equal in $L^0_{ \mathcal{P}_T \otimes \mathcal{B}_T}(E)$ if they coincides $ (\mathbb{P} \otimes  m) \otimes m$-a.e..
 $L^0_{ \mathcal{P}_T  \otimes \mathcal{B}_T}(E)$ is 
endowed  with the metrizable complete vector topology induced by the convergence in measure.
\item
For $p,q\in[1,\infty)$,  $L_{\mathcal{P}_T}^{p,q}(E)$ denotes the
 subspace
 of  $L^0_{\mathcal{P}_T}(E)$
 whose members $X$ satisfy
$$
|X|_{p,q}=
 \left(\int_0^T \left( \mathbb{E} \left[ |X_t|_E^p \right] \right)^{q/p} dt\right) ^{1/q}<\infty.
$$
$(L^{p,q}_{\mathcal{P}_T}(E),|\cdot|_{p,q})$ is a Banach space.
The space $L^{p,q}_{\mathcal{F}_T \otimes \mathcal{B}_T}(E)$ is defined similarly to 
$L^{p,q}_{\mathcal{P}_T}(E)$, after replacing $\mathcal{P}_T$ by $\mathcal{F}_T \otimes \mathcal{B}_T$.
We use the  notation $L^p_{\mathcal{P}_T}(E)$,
 $L^p_{\mathcal{F}_T \otimes \mathcal{B}_T}(E)$, for $L^{p,p}_{\mathcal{P}_T}(E)$, $L^{p,p}_{\mathcal{F}_T \otimes B_T}(E)$, respectively.
\item 
For $p,q,r\in [1,\infty)$.
$L^{p,q,r}_{\mathcal{P}_T \otimes {\mathcal{B}_T}}(E)$ denotes the space containing those $\zeta\in L^0_{\mathcal{P}_T \otimes {\mathcal{B}_T}}(E)$ such that
  \begin{equation}
    \label{eq:2015-11-06:02}
    |\zeta|_{p,q,r}\coloneqq 
     \left(\int_0^T \left(  \int_0^T  
      \left( \mathbb{E} \left[    |\zeta((\omega,s),t)|_{
E}^p  \right] \right) ^{q/p}ds  \right)^{r/q}dt  \right) ^{1/r}<\infty.
\end{equation}
$(L^{p,q,r}_{\mathcal{P}_T \otimes {\mathcal{B}_T}}(E),|\cdot|_{p,q,r})$ is a Banach space.
\end{itemize}

\subsection{Jointly measurable version}

In this section we employ Theorem~\ref{2015-09-21:01} to obtain jointly measurable versions of stochastic integrals
(represented, as in the previous section, by a generic 
continuous linear
operator $ \mathfrak I$) depending on parameter.

We will often need to consider sections of measurable functions and their measurability with respect to some codomains.
We begin with the following lemma.

\begin{lemma}
\label{2015-12-29:01}
    Let $\zeta\in L^0_{
    {\mathcal{P}_T} \otimes \mathcal{B}_T}(L_2(U,H))$.  
Then
      \begin{equation}
        f_\zeta\colon [0,T]\rightarrow L^0_{  \mathcal{P}_T} \left( L_2(U,H) \right) ,\ t \mapsto \zeta(\cdot,t)
      \end{equation}
is  measurable.
\end{lemma}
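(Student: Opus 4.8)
The plan is to establish the measurability of $f_\zeta$ first for scalar-valued integrands, by a functional monotone class argument, and then to bootstrap to the $L_2(U,H)$-valued case by approximation with simple functions. Throughout I rely on two structural facts. Since $\Omega_T$ carries the finite measure $\mathbb{P}\otimes m$, the space $L^0_{\mathcal{P}_T}(L_2(U,H))$ is, as recalled in the excerpt, a metrizable topological vector space for the convergence in measure; consequently scalar multiplication $\mathbb{R}\times L^0_{\mathcal{P}_T}\to L^0_{\mathcal{P}_T}$ and addition are continuous, pointwise $(\mathbb{P}\otimes m)$-a.e.\ convergence on $\Omega_T$ implies convergence in measure (the measure being finite), and a pointwise limit of measurable maps from $[0,T]$ into a metric space is again measurable. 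Moreover, for a fixed jointly measurable representative $\zeta$ and each fixed $t\in[0,T]$, the section $\zeta(\cdot,t)\colon\Omega_T\to L_2(U,H)$ is $\mathcal{P}_T$-measurable, being the composition of the measurable inclusion $(\omega,s)\mapsto((\omega,s),t)$ with $\zeta$, and it has separable range; hence $f_\zeta$ genuinely takes values in $L^0_{\mathcal{P}_T}(L_2(U,H))$.

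For the scalar case, let $\mathcal{V}$ denote the set of bounded $\mathcal{P}_T\otimes\mathcal{B}_T$-measurable functions $\zeta\colon\Omega_T\times[0,T]\to\mathbb{R}$ for which $t\mapsto\zeta(\cdot,t)$ is measurable into $L^0_{\mathcal{P}_T}(\mathbb{R})$. I would check that $\mathcal{V}$ is a vector space containing the constants and closed under uniform and bounded monotone limits. Closure under monotone limits is precisely the step that uses the observation above: if $\zeta_n\uparrow\zeta$ pointwise on $\Omega_T\times[0,T]$ with $\zeta$ bounded, then for each fixed $t$ one has $\zeta_n(\cdot,t)\uparrow\zeta(\cdot,t)$ pointwise on $\Omega_T$, hence $\zeta_n(\cdot,t)\to\zeta(\cdot,t)$ in $L^0_{\mathcal{P}_T}(\mathbb{R})$, so that $f_\zeta$ is the pointwise limit of the measurable maps $f_{\zeta_n}$ and is therefore measurable. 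Finally, for a rectangle $A\times B$ with $A\in\mathcal{P}_T$ and $B\in\mathcal{B}_T$ one has $\mathbf{1}_{A\times B}(\cdot,t)=\mathbf{1}_B(t)\,\mathbf{1}_A$, so $t\mapsto\mathbf{1}_{A\times B}(\cdot,t)$ is the composition of the Borel map $t\mapsto\mathbf{1}_B(t)$ with the continuous map $\lambda\mapsto\lambda\,\mathbf{1}_A$, whence $\mathbf{1}_{A\times B}\in\mathcal{V}$. Since such rectangles form a $\pi$-system generating $\mathcal{P}_T\otimes\mathcal{B}_T$, the functional monotone class theorem yields that $\mathcal{V}$ contains all bounded $\mathcal{P}_T\otimes\mathcal{B}_T$-measurable scalar functions; truncation then removes the boundedness restriction.

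It remains to pass from $\mathbb{R}$ to $L_2(U,H)$. Because $\zeta$ has separable range, I would approximate it pointwise by $L_2(U,H)$-valued simple functions $\zeta_n=\sum_{i=1}^{k_n}\mathbf{1}_{C_i^n}\,v_i^n$, with $C_i^n\in\mathcal{P}_T\otimes\mathcal{B}_T$ and $v_i^n\in L_2(U,H)$, chosen so that $|\zeta_n-\zeta|_{L_2(U,H)}\to0$ everywhere on $\Omega_T\times[0,T]$. For each such simple function, $t\mapsto\mathbf{1}_{C_i^n}(\cdot,t)$ is measurable into $L^0_{\mathcal{P}_T}(\mathbb{R})$ by the scalar case; composing with the continuous map $\lambda\mapsto\lambda\,v_i^n$ and summing the finitely many terms (addition being continuous) shows that $f_{\zeta_n}$ is measurable into $L^0_{\mathcal{P}_T}(L_2(U,H))$. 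Since for each fixed $t$ we have $\zeta_n(\cdot,t)\to\zeta(\cdot,t)$ pointwise on $\Omega_T$, hence in measure, $f_\zeta$ is once more the pointwise limit of the measurable maps $f_{\zeta_n}$ and is therefore measurable, which is the assertion.

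The proof involves no deep estimate; the point requiring care is the bookkeeping around sections and null sets, namely fixing a genuine jointly measurable representative so that the sections $\zeta(\cdot,t)$ are well defined, and verifying the monotone-convergence closure with everywhere (rather than merely almost-everywhere) convergence so that the passage to the limit is legitimate for every $t$.
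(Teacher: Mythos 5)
Your proof is correct, and it follows the same two-stage skeleton as the paper's: first the scalar case by a monotone-class argument seeded with predictable rectangles, then the $L_2(U,H)$-valued case by reduction to the scalar one and a pointwise passage to the limit, using exactly the two facts you isolate (everywhere convergence on the finite measure space $(\Omega_T,\mathcal{P}_T,\mathbb{P}\otimes m)$ implies convergence in measure, and pointwise limits of measurable maps into a metric space are measurable). The differences are in the implementation. In the scalar stage the paper runs a $\pi$-$\lambda$ (Dynkin) argument on the class of sets $\mathcal{C}=\{A\in\mathcal{P}_T\otimes\mathcal{B}_T : f_{\mathbf{1}_A}\ \mbox{measurable}\}$ and then uses linearity and monotone convergence, whereas you apply the functional monotone class theorem to bounded functions and then truncate; these are interchangeable. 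In the vector stage the paper expands $\zeta$ along an orthonormal basis $\{\varphi_n\}$ of $L_2(U,H)$ and exhibits $f_\zeta$ as the pointwise limit of the partial-sum maps built from the scalar functions $\langle\varphi_n,\zeta\rangle$, while you approximate $\zeta$ everywhere by $\mathcal{P}_T\otimes\mathcal{B}_T$-simple functions, which exist precisely because membership in $L^0_{\mathcal{P}_T\otimes\mathcal{B}_T}(L_2(U,H))$ includes separability of the range. Your variant never invokes the Hilbert structure of the target, so it proves the lemma verbatim for Banach-space-valued $\zeta$ with separable range, a modest gain in generality; the paper's basis expansion buys a canonical approximating sequence requiring no construction. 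One fine point common to both arguments that is worth flagging: your vector-space axiom for $\mathcal{V}$ (and the summation of the finitely many simple terms), like the paper's step $f_{\mathbf{1}_{A\setminus B}}=f_{\mathbf{1}_A}-f_{\mathbf{1}_B}$, uses that sums of measurable $L^0_{\mathcal{P}_T}$-valued maps are measurable; since $L^0_{\mathcal{P}_T}$ need not be separable, this is cleanest if one carries ``separable range'' along in the induction, so that all compositions with the continuous vector operations take place inside a separable closed subspace, where the Borel $\sigma$-algebra of the product coincides with the product of the Borel $\sigma$-algebras.
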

\begin{proof}
Let us first suppose that $U=H=\mathbb{R}$, hence $L_2(U,H)=\mathbb{R}$.
Define
$$
\mathcal{C}\coloneqq  \left\{ A\in  \mathcal{P}_T \otimes \mathcal{B}_T\ \mbox{s.t.}\ f_{\mathbf{1}_A}\ \mbox{is measurable}\right\}.
$$
It is clear that the rectangles of the form $B\times C$, with  $B\in \mathcal{P}_T$
and $C\in \mathcal{B}_T$, belong to $\mathcal{C}$, because $f_{\mathbf{1}_{B\times C}}$ assumes only the two values $0$ ans $\mathbf{1}_B$ on $\Omega_T\setminus C$ and on $B$, respectively.
If $A\in \mathcal{C}$, $B\in \mathcal{C}$, $B\subset A$, then $f_{\mathbf{1}_{A\setminus B}}=f_{\mathbf{1}_A}-f_{\mathbf{1}_B}$ is measurable, and then $A\setminus B\in \mathcal{C}$.
If $\{A_n\}_{n\in \mathbb{N}}\subset \mathcal{C}$ is an increasing sequence, then $f_{\mathbf{1}_{\cup_{n\in \mathbb{N}}A_n}}(t)=\lim_{n\rightarrow \infty}f_{\mathbf{1}_{A_n}}(t)$ in $L^0_{\mathcal{P}_T}(\mathbb{R})$ for all $t\in[0,T]$, hence $\bigcup_{n\in \mathbb{N}}A_n\in \mathcal{C}$.
This shows that $\mathcal{C}$ is a $\lambda$-class containing the rectangles $B\times C$, with $B\in \mathcal{P}_T$ and $C\in \mathcal{B}_T$, hence $\mathcal{P}_T \otimes  \mathcal{B}_T\subset \mathcal{C}$.
By linearity and by 
monotone convergence,
we have that $f_\zeta$ is measurable for all $\zeta\in L^0_{\mathcal{P}_T \otimes \mathcal{B}_T}(\mathbb{R})$.

Now let $U$, $H$, be generic separable Hilbert spaces
 and let
$\{\varphi_n\}_{n\in \mathbb{N}}$ be an orthonormal basis for $L_2(U,H)$ (we consider the case $\dim L_2(U,H)=\infty$; the case $<\infty$ is similar).
If $\zeta\in L^0_{\mathcal{P}_T \otimes \mathcal{B}_T}(L_2(U,H))$, then, for all $t\in[0,T]$,
$$
f_\zeta(t)(\omega,s)= \sum_{n\in \mathbb{N}}\langle
\varphi_n,
 \zeta((\omega,s),t)\rangle_{L_2(U,H)}\varphi_n
=\sum_{n\in \mathbb{N}}
f_{\langle
\varphi_n,
 \zeta\rangle_{L_2(U,H)}\varphi_n}(t)(\omega,s)\qquad \forall (\omega,s)\in \Omega_T.
$$
From the first part of the proof, 
$f_{\langle
\varphi_n,
 \zeta\rangle_{L_2(U,H)}\varphi_n}$ is measurable, after the identification $L^0_{\mathcal{P}_T}(\mathbb{R})=L^0_{\mathcal{P}_T}(\mathbb{R}\varphi_n)$ and the continuous, hence measurable, embedding 
$$
L^0_{\mathcal{P}_T}(\mathbb{R}\varphi_n)\subset L^0_{\mathcal{P}_T}(L_2(U,H)).
$$
 We  conclude that $f_\zeta$ is measurable, because it is the pointwise limit of the sequence
$$
\left\{\sum_{n=0}^Nf_{\langle
\varphi_n,
 \zeta\rangle_{L_2(U,H)}}\varphi_n\colon [0,T]\rightarrow L^0_{\mathcal{P}_T}(L_2(U,H))\right\}_{N\in \mathbb{N}}.
\eqno\qed
$$
\let\qed\relax
\end{proof}

\smallskip
\begin{remark}\label{2015-12-29:02}
If $p,q\in [1,\infty)$,  the map
$$
|\cdot|_{p,q}\colon  L^0_{ \mathcal{P}_T}(L_2(U,H))\rightarrow [0,\infty],\ \xi \mapsto |\xi|_{p,q}
$$
is lower-semicontinuous (Fatou's Lemma).
By Lemma \ref{2015-12-29:01}, if $\zeta\in L^0_{\mathcal{P}_T   \otimes \mathcal{B}_T}(L_2(U,H))$, then
      \begin{equation}
        f_\zeta\colon [0,T]\rightarrow L^0_{  \mathcal{P}_T} \left( L_2(U,H) \right) ,\ t \mapsto \zeta(\cdot,t)
      \end{equation}
is  measurable.
By combining $f_\zeta$ with $|\cdot|_{p,q}$, we have that the set 
\begin{equation}
  \label{eq:2015-12-29:03}
    B_\zeta\coloneqq 
\left\{ t\in[0,T]\colon 
      \zeta(\cdot,t)\in L^{p,q}_{{\mathcal{P}_T}}(L_2(U,H)) \right\}
  \end{equation}
  is a Borel set.
\end{remark}

\bigskip
Clearly the set $B_\zeta$ defined
in Remark \ref{2015-12-29:02}
 depends on the representant of $\zeta$ chosen in $L^0_{\mathcal{P}_T \otimes \mathcal{B}_T}(L_2(U,H)$. Hereafter,
 whenever a notion associated to some function $f$ belonging to some quotient space of mesurable functions
 is pointwise dependent, we mean that the notion is actually associated to a chosen representant $f$.

 \begin{notation}
   In what follows, we will always use the notation $B_\zeta$ for the set defined by \eqref{eq:2015-12-29:03}, when $\zeta\in L^0_{\mathcal{P}_T \otimes \mathcal{B}_T}(L_2(U,H))$.
In the notation, we omit the dependence of $B_\zeta$ on $p,q$, as it will be always clear from the context.
\end{notation}

\smallskip
The next result is a variant of
 Lemma~\ref{2015-12-29:01}
for 
$L^{p,q,r}_{\mathcal{P}_T \otimes  \mathcal{B}_T}(L_2(U,H))$. It will be used to derive jointly measurable versions of stochastic convolutions.


\begin{lemma}
  \label{2016-01-10:00}
  Let $p,q,r\in[1,\infty)$ and let $\zeta\in L^{p,q,r}_{\mathcal{P}_T \otimes  \mathcal{B}_T}(L_2(U,H))$.
  Let $B_\zeta$ be the Borel set defined by  \eqref{eq:2015-12-29:03}.
Then $m([0,T]\setminus B_\zeta)=0$ and
$$
f_\zeta\colon B_\zeta\rightarrow L^{p,q}_{\mathcal{P}_T}(L_2(U,H)),\ t \mapsto \zeta (\cdot,t)
$$
is Borel measurable.
\end{lemma}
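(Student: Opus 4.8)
The plan is to prove the two assertions separately: the full-measure statement follows at once from the integrability of $\zeta$, while the Borel measurability of $f_\zeta$ is obtained by a Dynkin-class argument in the spirit of Lemma~\ref{2015-12-29:01}, the key point being to upgrade measurability from the coarse topology of convergence in measure to the finer $L^{p,q}$-norm topology.

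First I would dispose of $m([0,T]\setminus B_\zeta)=0$. For each fixed $t$ the section $\zeta(\cdot,t)\colon\Omega_T\to L_2(U,H)$ is $\mathcal{P}_T$-measurable with separable range (sections of jointly measurable maps are measurable), so $\zeta(\cdot,t)\in L^0_{\mathcal{P}_T}(L_2(U,H))$ and $t\in B_\zeta$ exactly when $|\zeta(\cdot,t)|_{p,q}<\infty$. Since by \eqref{eq:2015-11-06:02}
$$
|\zeta|_{p,q,r}^r=\int_0^T|\zeta(\cdot,t)|_{p,q}^r\,dt<\infty,
$$
the integrand is finite for $m$-a.e.\ $t$, which is precisely $m([0,T]\setminus B_\zeta)=0$; that $B_\zeta$ is Borel has already been recorded in Remark~\ref{2015-12-29:02} via \eqref{eq:2015-12-29:03}.

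For the measurability of $f_\zeta$ I would first treat indicators. Let $\mathcal{C}\coloneqq\{A\in\mathcal{P}_T\otimes\mathcal{B}_T\colon t\mapsto\mathbf{1}_{A_t}\text{ is Borel measurable into }L^{p,q}_{\mathcal{P}_T}(L_2(U,H))\}$, where $A_t\in\mathcal{P}_T$ is the $t$-section of $A$; note each $\mathbf{1}_{A_t}$ does lie in $L^{p,q}_{\mathcal{P}_T}$ because the measures are finite, so $|\mathbf{1}_{A_t}|_{p,q}\le T^{1/q}$. On rectangles $A=B\times C$ the map $t\mapsto\mathbf{1}_{A_t}$ takes only the two values $\mathbf{1}_B$ and $0$, hence is Borel; for $B\subseteq A$ with $A,B\in\mathcal{C}$ one has $\mathbf{1}_{(A\setminus B)_t}=\mathbf{1}_{A_t}-\mathbf{1}_{B_t}$, a difference of Banach-space-valued measurable maps; and for $A_n\uparrow A$ dominated convergence gives $\mathbf{1}_{(A_n)_t}\to\mathbf{1}_{A_t}$ in $L^{p,q}_{\mathcal{P}_T}$ for every $t$, so the pointwise limit is measurable. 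Thus $\mathcal{C}$ is a $\lambda$-class containing the $\pi$-class of rectangles, whence $\mathcal{C}=\mathcal{P}_T\otimes\mathcal{B}_T$, and by linearity $t\mapsto\eta(\cdot,t)$ is Borel measurable into $L^{p,q}_{\mathcal{P}_T}$ for every simple $\eta$.

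Finally I would pass to the general $\zeta$ by approximation: by the standard Bochner approximation (using that $\zeta$ has separable range) together with dominated convergence through the iterated integrals, choose simple functions $\zeta_n\to\zeta$ in $L^{p,q,r}_{\mathcal{P}_T\otimes\mathcal{B}_T}(L_2(U,H))$, and pass to a subsequence so that $|\zeta_n(\cdot,t)-\zeta(\cdot,t)|_{p,q}\to0$ for $m$-a.e.\ $t\in B_\zeta$. Then $f_\zeta$ is, $m$-a.e.\ on $B_\zeta$, the pointwise limit of the Borel-measurable maps $f_{\zeta_n}$, and since pointwise limits of measurable maps into a metric space are measurable, $f_\zeta\colon B_\zeta\to L^{p,q}_{\mathcal{P}_T}(L_2(U,H))$ is Borel measurable. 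The main obstacle is exactly this upgrade from convergence in measure to the $L^{p,q}$-norm: the dominated-convergence estimate securing norm-convergence along increasing unions is what makes the Dynkin step close, and one must be careful that the sections land in $L^{p,q}_{\mathcal{P}_T}$ only after discarding the $m$-null set $[0,T]\setminus B_\zeta$.
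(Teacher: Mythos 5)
Your first paragraph (the full-measure claim) and your Dynkin-class step for indicators and simple functions are correct, and they coincide with the paper's own route: the paper simply says to repeat the proof of Lemma~\ref{2015-12-29:01} with $L^0_{\mathcal{P}_T}$ replaced by $L^{p,q}_{\mathcal{P}_T}$. The genuine gap is in your final passage to general $\zeta$. You approximate $\zeta$ by simple functions in the $|\cdot|_{p,q,r}$-norm and extract a subsequence with $|\zeta_n(\cdot,t)-\zeta(\cdot,t)|_{p,q}\to 0$ for $m$-a.e.\ $t\in B_\zeta$. This only shows that there is a Borel $m$-null set $N\subset B_\zeta$ such that $f_\zeta$ restricted to $B_\zeta\setminus N$ is a pointwise limit of Borel maps, hence Borel there; on $N$ your argument gives no control at all on $t\mapsto\zeta(\cdot,t)$. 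A map that is Borel off a null set and unconstrained on it need not be Borel (think of the indicator of a non-Borel subset of a Borel null set), so what you have proved is measurability of $f_\zeta$ with respect to the $m$-completion of the Borel $\sigma$-algebra of $B_\zeta$, which is strictly weaker than the lemma's assertion. The distinction is not cosmetic here: the lemma is used to build the integrand $g$ fed into Theorem~\ref{2015-09-21:01}, whose measure space is $([0,T],\mathcal{B}_T,m)$ and not its completion.

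The repair is to make the approximation converge at \emph{every} $t$, not just almost every $t$, and this is exactly what the paper's proof arranges. First redefine $\zeta((\omega,s),t)\coloneqq 0$ for $t\in[0,T]\setminus B_\zeta$; this does not change $f_\zeta$ on $B_\zeta$, and after the redefinition every section satisfies $\zeta(\cdot,t)\in L^{p,q}_{\mathcal{P}_T}(L_2(U,H))$. Then use an approximation by simple functions that is pointwise everywhere and dominated, rather than norm-convergent: since $\zeta$ is jointly measurable with separable range, one can choose simple $\zeta_n\to\zeta$ pointwise on $\Omega_T\times[0,T]$ with $|\zeta_n-\zeta|_{L_2(U,H)}\le|\zeta|_{L_2(U,H)}$ (the paper instead uses linearity, monotone convergence and an orthonormal expansion of $L_2(U,H)$, which amounts to the same device). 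For each fixed $t$, dominated convergence applied twice through the iterated integral, with dominating function $\left(\mathbb{E}\left[|\zeta(\cdot,t)|_{L_2(U,H)}^p\right]\right)^{q/p}$, integrable in $s$ precisely because $\zeta(\cdot,t)\in L^{p,q}_{\mathcal{P}_T}(L_2(U,H))$, yields $|\zeta_n(\cdot,t)-\zeta(\cdot,t)|_{p,q}\to 0$ for every $t\in[0,T]$. Thus $f_\zeta$ is an everywhere pointwise limit of Borel maps into the metric space $L^{p,q}_{\mathcal{P}_T}(L_2(U,H))$, hence Borel, and restricting to the Borel set $B_\zeta$ gives the statement with no exceptional set to discard.
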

\begin{proof}
It is clear that $m([0,T]\setminus B_\zeta)=0$,
because $\zeta\in L^{p,q,r}_{\mathcal{P}_T \otimes \mathcal{B}_T}(L_2(U,H))$ and then $\zeta(\cdot,t)\in L^{p,q}_{\mathcal{P}_T}(L_2(U,H))$ for $m$-a.e.\ $t\in [0,T]$.
By redefining $\zeta((\omega,s),t)\coloneqq 0$ for $((\omega,s),t)\in  \Omega_T\times [0,T]$, $t\in [0,T]\setminus B_\zeta$, we can assume that $B_\zeta=[0,T]$.
In such a case, to show that $f_\zeta$ is Borel measurable, we argue as in the proof of
 Lemma \ref{2015-12-29:01}, 
after replacing $L^0_{\mathcal{P}_T \otimes \mathcal{B}_T}$ by $L^{p,q,r}_{\mathcal{P}_T \otimes  \mathcal{B}_T}$ and $L^0_{\mathcal{P}_T}$ by $L^{p,q}_{ \mathcal{P}_T}$.
\end{proof}

For $p,q,r\in[1,\infty)$, let 
\begin{equation}
  \label{eq:2017-05-07:06}
 \mathfrak I\colon  L^{p,q}_{\mathcal{P}_T}(L_2(U,H))\rightarrow \mathcal{L}^r_{\mathcal{F}_T \otimes \mathcal{B}_T}(\mathbb{T})
\end{equation}
be a linear and continuous operator,
where $\mathcal{L}^r_{\mathcal{F}_T \otimes \mathcal{B}_T}(\mathbb{T})$ is defined as in Section~\ref{2016-01-19:05} (p.\ ~\pageref{not:2017-05-07:08}), with $\mathcal{P}'=\mathcal{F}_T \otimes \mathcal{B}_T$.

\bigskip
  Let $\zeta\in L^0_{\mathcal{P}_T \otimes \mathcal{B}_T}\blp L_2(U,H)\brp$ be a given representant of its class.
Our aim is to show that
there exists a  $(\omega,t)$-jointly measurable version of
 the family of random variables 
 \begin{equation}
   \label{eq:2017-05-07:10}
    \mathfrak I^\zeta_t
\coloneqq (\mathfrak I(\zeta(\cdot,t)))_{t\in B_\zeta},
\end{equation}
where $B_\zeta$ is defined by \eqref{eq:2015-12-29:03}.

 \begin{remark}\label{2016-01-09:02}
  Definition \ref{eq:2017-05-07:10} depends on the chosen representant $\zeta$.
If $\zeta=\zeta'$ in the space $L^0_{\mathcal{P}_T \otimes \mathcal{B}_T}(L_2(U,H))$, then $m(B_{\zeta}\bigtriangleup B_{\zeta'})=0$, and,
 due to the fact that $ \mathfrak I$ has values in $\mathcal{L}^r_{\mathcal{F}_T \otimes \mathcal{B}_T}(\mathbb{T})$, 
we have  $ \mathfrak I^\zeta_t= \mathfrak I^{\zeta'}_t$
 $\mathbb{P}$-a.e., for all $t\in B_\zeta\medcap B_{\zeta'}$.
\end{remark}


\begin{theorem}\label{2017-05-07:11}
Let $p,q,r\in [1,\infty)$, let 
$\zeta\in L^{p,q,r}_{\mathcal{P}_T \otimes \mathcal{B}_T}(L_2(U,H))$,
and let
$B_\zeta$ be the set defined by \eqref{eq:2015-12-29:03}.
  Then  
 there exists
a process 
\begin{equation}
  \label{eq:2017-05-07:03}
  \Sigma^\zeta\in L^r_{ \mathcal{F}_T \otimes \mathcal{B}_T}(F)
\end{equation}
 such that
 \begin{equation}
   \label{eq:2017-05-07:04}
   \mbox{for $m$-a.e.\ $t\in B_\zeta$,\ }
\Sigma^\zeta_t(\omega)=  (\mathfrak I(\zeta(\cdot,t)))_t(\omega)\ \mathbb{P}\textrm{-a.e.\ } \omega\in \Omega.
\end{equation}
Moreover, 
the map
\begin{equation}\label{2017-05-07:02}
\mathbf{J}\colon L^{p,q,r}_{\mathcal{P}_T \otimes \mathcal{B}_T}(L_2(U,H)) \rightarrow L^r_{\mathcal{F}_T \otimes \mathcal{B}_T}(F),\ \zeta \mapsto  \Sigma^\zeta
\end{equation}
is linear, continuous,
 uniquely determined by \eqref{eq:2017-05-07:03}, \eqref{eq:2017-05-07:04}.
The operator norm of $\mathbf{J}$ is bounded by the operator norm of $ \mathfrak I$.
\end{theorem}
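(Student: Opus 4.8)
The plan is to obtain $\Sigma^\zeta$ by applying the stochastic Fubini's theorem (Theorem~\ref{2015-09-21:01}) to the family $\{\mathfrak{I}(\zeta(\cdot,t))\}_t$ and then evaluating the resulting jointly measurable object on the time diagonal. Concretely, I would invoke Theorem~\ref{2015-09-21:01} with the identifications $D_1=\Omega$, $\mathcal{D}_1=\mathcal{F}_T$, $D_2=[0,T]$, $\mathcal{D}_2=\mathcal{B}_T$, $\nu_1\equiv\mathbb{P}$, $\nu_2=m$, $\mathcal{D}=\mathcal{P}_T$, $E=L_2(U,H)$, $G=([0,T],\mathcal{B}_T,m)$, $\mathfrak{L}=\mathfrak{I}$ and $\mathcal{P}'=\mathcal{F}_T\otimes\mathcal{B}_T$. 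With these choices $L^{p,q}_{\mathcal{D}}(E)=L^{p,q}_{\mathcal{P}_T}(L_2(U,H))$ is exactly the domain of $\mathfrak{I}$, and since $\nu_1(D_1,x)=\mathbb{P}(\Omega)=1$ for every $x$, the constants of \eqref{eq:2017-05-06:00}--\eqref{eq:2017-05-06:01} reduce to $T^{(q-1)/q}$ (resp.\ $1$) and are finite, so the hypotheses of Lemma~\ref{2015-08-15:00} underlying Theorem~\ref{2015-09-21:01} are met. For the datum I would take $g:=f_\zeta$, extended by $0$ on the $m$-null set $[0,T]\setminus B_\zeta$; by Lemma~\ref{2016-01-10:00} this is Borel measurable into $L^{p,q}_{\mathcal{P}_T}(L_2(U,H))$, and the very definition of $|\cdot|_{p,q,r}$ yields the isometry $|f_\zeta|_{L^r([0,T],L^{p,q}_{\mathcal{P}_T})}=|\zeta|_{p,q,r}<\infty$. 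Approximating $\zeta$ in $|\cdot|_{p,q,r}$ by simple functions (for which $f_{\zeta_n}$ has finite range) shows, along a subsequence converging $m$-a.e., that $f_\zeta$ has separable range; since $[0,T]$ has finite measure and $r\ge1$, this gives $g\in L^1([0,T],L^{p,q}_{\mathcal{P}_T}(L_2(U,H)))$.

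Theorem~\ref{2015-09-21:01} then produces a jointly measurable $X_2\colon(\Omega_T\times[0,T],(\mathcal{F}_T\otimes\mathcal{B}_T)\otimes\mathcal{B}_T)\to F$ with $X_2(\cdot,t)\in\mathcal{L}^r_{\mathcal{F}_T\otimes\mathcal{B}_T}(\mathbb{T})$ for every $t$ and, by \eqref{2017-05-07:00}, $X_2(\cdot,t)=\mathfrak{I}(\zeta(\cdot,t))$ in that space for $m$-a.e.\ $t$. Writing $s$ for the path-time variable inside $\Omega_T$ and $t$ for the parameter, I would set
\[
\Sigma^\zeta_t(\omega):=X_2((\omega,t),t).
\]
Measurability of $\Sigma^\zeta$ follows by composing $X_2$ with the diagonal embedding $(\omega,t)\mapsto((\omega,t),t)$, which is $\mathcal{F}_T\otimes\mathcal{B}_T/(\mathcal{F}_T\otimes\mathcal{B}_T)\otimes\mathcal{B}_T$-measurable since the preimage of a rectangle $P\times C$ is $P\cap(\Omega\times C)\in\mathcal{F}_T\otimes\mathcal{B}_T$; separability of its range is inherited from $X_2$.

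The crux, and the step I expect to be the main obstacle, is passing from the a.e.-in-$(\omega,s)$ identity supplied by Fubini to the value at the single path-time $s=t$. This is precisely where it matters that $\mathfrak{I}$ lands in $\mathcal{L}^r_{\mathcal{F}_T\otimes\mathcal{B}_T}(\mathbb{T})$, whose elements are equivalence classes under $\mathbb{P}$-a.e.\ equality of the $\mathbb{T}$-valued path and $\mathbb{T}\subset B_b([0,T],F)$ carries the sup-norm $|\cdot|_\infty$. Consequently $X_2(\cdot,t)=\mathfrak{I}(\zeta(\cdot,t))$ in $\mathcal{L}^r_{\mathcal{F}_T\otimes\mathcal{B}_T}(\mathbb{T})$ means that for $\mathbb{P}$-a.e.\ $\omega$ the two paths agree at \emph{every} $s\in[0,T]$, in particular at $s=t$, so that for $m$-a.e.\ $t$ (hence $m$-a.e.\ $t\in B_\zeta$, since $m([0,T]\setminus B_\zeta)=0$),
\[
\Sigma^\zeta_t(\omega)=X_2((\omega,t),t)=(\mathfrak{I}(\zeta(\cdot,t)))_t(\omega)\quad\mathbb{P}\text{-a.e.\ }\omega,
\]
which is \eqref{eq:2017-05-07:04}. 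Had $\mathbb{T}$ been an $L^p$-type path space, evaluation on the diagonal would be meaningless; the sup-norm is exactly what legitimises it. Integrability \eqref{eq:2017-05-07:03} and the norm bound then come from the pointwise estimate $|\Sigma^\zeta_t(\omega)|\le|X_2(\cdot,t)(\omega)|_\infty$, which gives $\mathbb{E}[|\Sigma^\zeta_t|^r]\le|X_2(\cdot,t)|^r_{\mathcal{L}^r(\mathbb{T})}=|\mathfrak{I}(\zeta(\cdot,t))|^r_{\mathcal{L}^r(\mathbb{T})}\le\|\mathfrak{I}\|^r\,|f_\zeta(t)|^r_{L^{p,q}_{\mathcal{P}_T}}$ for $m$-a.e.\ $t$; integrating in $t$ and using the isometry above yields $|\Sigma^\zeta|_{L^r_{\mathcal{F}_T\otimes\mathcal{B}_T}(F)}\le\|\mathfrak{I}\|\,|\zeta|_{p,q,r}$.

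Finally, uniqueness, linearity and continuity of $\mathbf{J}$ are formal. Any two elements of $L^r_{\mathcal{F}_T\otimes\mathcal{B}_T}(F)$ satisfying \eqref{eq:2017-05-07:04} coincide for $m$-a.e.\ $t$ and $\mathbb{P}$-a.e.\ $\omega$, hence agree as elements of $L^r_{\mathcal{F}_T\otimes\mathcal{B}_T}(F)$; this is uniqueness. Linearity of $\mathbf{J}$ then follows from uniqueness and the linearity of $\mathfrak{I}$: for $\zeta,\zeta'$ and scalars $a,b$, the process $a\Sigma^\zeta+b\Sigma^{\zeta'}$ satisfies the characterising identity \eqref{eq:2017-05-07:04} for $a\zeta+b\zeta'$ on the full-measure set $B_\zeta\cap B_{\zeta'}$, so it must equal $\Sigma^{a\zeta+b\zeta'}$. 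Continuity, with operator norm bounded by $\|\mathfrak{I}\|$, is exactly the estimate established in the previous step.
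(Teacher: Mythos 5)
Your proposal is correct and follows essentially the same route as the paper's proof: the same instantiation of Theorem~\ref{2015-09-21:01} (with $g=f_\zeta$ extended by $0$ off $B_\zeta$, measurable by Lemma~\ref{2016-01-10:00}), the same diagonal evaluation $\Sigma^\zeta_t(\omega)=X_2((\omega,t),t)$, the same sup-norm estimate $|\Sigma^\zeta_t(\omega)|\le |X_2(\cdot,t)(\omega)|_\infty$ for the $L^r$ bound, and the same uniqueness/linearity argument. Your explicit verifications (finiteness of $C(p,q)$, measurability of the diagonal embedding, and the observation that equality in $\mathcal{L}^r_{\mathcal{F}_T\otimes\mathcal{B}_T}(\mathbb{T})$ is $\mathbb{P}$-a.s.\ equality of entire paths, which legitimises evaluation at $s=t$) are points the paper leaves implicit, but they are exactly the mechanisms its proof relies on.
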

\begin{proof}
We apply Theorem~\ref{2015-09-21:01}, with the following data:
\begin{itemize}
\itemsep=-1mm
\item $G=[0,T]$, $\mathcal{G}=\mathcal{B}_T$, $\mu=m$;
\item $D_1=\Omega$, $D_2=[0,T]$, $D=\Omega_T$, $\mathcal{D}=\mathcal{P}_T$, $\nu_1=\mathbb{P}$, $\nu_2=m$;
\item $E=L_2(U,H)$;
\item  $\mathfrak L= \mathfrak I$;
\item $g\colon [0,T]\rightarrow L^{p,q}_{\mathcal{P}_T}(L_2(U,H))$ defined by
  \begin{equation*}
    g(t)\coloneqq
    \begin{dcases}
      \zeta(\cdot,t)&\mbox{if}\ t\in B_\zeta\\
      0&\mbox{if}\ t\in[0,T]\setminus B_\zeta.
    \end{dcases}
  \end{equation*}
By Lemma~\ref{2016-01-10:00}, $g$ is well-defined and measurable.
Moreover, $\zeta\in L^{p,q,r}_{\mathcal{P}_T \otimes \mathcal{B}_T}(L_2(U,H))$ implies $g\in L^1([0,T],L^{p,q}_{\mathcal{P}_T}(L_2(U,H))$.
\end{itemize}
Let $X_2$ be the process provided by application of 
the theorem.
Then 
\begin{equation}
  \label{eq:2017-05-07:01}
  X_2(\cdot,t)= \mathfrak I(\zeta(\cdot,t))\ \mbox{in}\ \mathcal{L}^r_{\mathcal{F}_T \otimes \mathcal{B}_T}(\mathbb{T}),\ \mathbb{P}\mbox{-a.e.\ }t\in B_\zeta.
\end{equation}
Define
$$
\Sigma^\zeta_t(\omega)\coloneqq X_2((\omega,t,),t)\qquad \forall(\omega,t)\in\Omega_T,\ t\in [0,T].
$$
Then $\Sigma^\zeta$ is jointly measurable in $(\omega,t)$, and, by \eqref{eq:2017-05-07:01},
 for $m$-a.e.\ $t\in B_\zeta$,
$$
\Sigma^\zeta_t(\omega)=X_2((\omega,t),t)= (\mathfrak I( g(t)))_t(\omega)= (\mathfrak I(\zeta(\cdot,t)))_t(\omega)\qquad \mathbb{P}\mbox{-a.e.\ }\omega\in\Omega.
$$
Moreover, 
\begin{equation}\label{2017-05-07:05}
  \begin{split}
    \int_0^T\mathbb{E} \left[|\Sigma^\zeta_t|^r_F \right]dt 
&=
\int_0^T\mathbb{E} \left[ |X_2((\cdot,t),t)|^r_F\right]dt\\
& \leq
 \int_0^T \mathbb{E} \left[\sup_{s\in[0,T]}|X_2((\cdot,s),t)|_F^r  \right]dt\\
&=
\int_0^T
|X_2(\cdot,t)|^r_{\mathcal{L}^r_{\mathcal{F}_T \otimes \mathcal{B}_T}(\mathbb{T})}dt\\
&=
\mbox{(by \eqref{eq:2017-05-07:01})}=
\int_0^T
| \mathfrak I(\zeta(\cdot,t))|^r_{\mathcal{L}^r_{\mathcal{F}_T \otimes \mathcal{B}_T}(\mathbb{T})}dt\\
&\leq C
\int_0^T
| \zeta(\cdot,t)|^r_{L^{p,q}_{\mathcal{P}_T (L_2(U,H))}}dt
=C|\zeta|_{L^{p,q,r}_{\mathcal{P}_T \otimes \mathcal{B}_T}(L_2(U,H))}^r.
\end{split}
\end{equation}
This shows \eqref{eq:2017-05-07:03}.

Now, if $\Sigma_1$ and $\Sigma_2$ satisfy 
\eqref{eq:2017-05-07:03}
and
\eqref{eq:2017-05-07:04}, with respect to the same $\zeta$, then they belong to the same class in $L^r_{\mathcal{F}_T \otimes \mathcal{B}_T}(F)$, because $m([0,T]\setminus B_\zeta)=0$.
Similarly, if $\zeta_1= \zeta_2$  in $L^{p,q,r}_{\mathcal{P}_T \otimes \mathcal{B}_T}(L_2(U,H))$, then, as noticed in
Remark~\ref{2016-01-09:02},
for $m$-a.e.\ $t\in[0,T]$,
 $ \mathfrak I^{\zeta_1}_t(\omega)= \mathfrak I^{ \zeta_2}_t(\omega)$ $\mathbb{P}$-a.e.\ $\omega\in\Omega$.
Then \eqref{eq:2017-05-07:04} entails $\Sigma^{\zeta_1}=\Sigma^{  \zeta_2}$ for $\mathbb{P} \otimes m$-a.e.\ $(\omega,t)\in \Omega_T$.
This shows that 
 \eqref{2017-05-07:02} is well-defined.
Linearity is clear.
Continuity comes from \eqref{2017-05-07:05}.
\end{proof}

In general, we cannot hope to have versions of $ \mathfrak I^\zeta$ with a better measurability than the one provided by Theorem~\ref{2017-05-07:11}, without further assumptions on $ \mathfrak I$ (observe  that our assumptions on $ \mathfrak  I$ do not take in consideration any progressive measurability
 of the values of $ \mathfrak I$).

\bigskip

We now address the case when 
$
\zeta\in L^{p,q,r}_{\mathcal{P}_T \otimes \mathcal{B}_T}(L_2(U,H))
$  has the form
$$
\zeta((\omega,s),t)=R(t-s)\Phi_s(\omega)\eqqcolon \Phi_R((\omega,s),t)\qquad \forall (\omega,s)\in\Omega_T,\ t\in(s,T],
$$ where $R\colon (0,T]\rightarrow L(H)$ is strongly continuous and $\Phi\in L(U,H)^{\Omega_T}$ is a function.

Under a technical assumption on $R$, we characterize those functions $\Phi\in L(U,H)^{\Omega_T}$ for which 
$\Phi_R$ belongs to $L^0_{\mathcal{P}_T \otimes \mathcal{B}_T}(L_2(U,H))$. This  fact is of interest because it is 
the minimal requirement in order to define the family 
$
  \mathfrak I^{\Phi_R}=\{\mathfrak I^{\Phi_R}_t\}_{t\in B_{\Phi_R}}
$
by
 \eqref{eq:2017-05-07:10} (with $\zeta=\Phi_R$), and
to obtain the joint measurability of $ \mathfrak I^{\Phi_R}$ through Theorem~\ref{2017-05-07:11}.


\begin{assumption}\label{2015-11-06:07}
The function $R\colon (0,T]\rightarrow L(H)$ is strongly continuous and there exists a sequence $\{t_n\}_{n\in \mathbb{N}}\subset (0,T]$ converging to $0$ such that,
if $C\subset H$  is closed, convex, and bounded, 
then $u\in C$
if and only if
$\exists m\in \mathbb{N}\colon R(t_n)u\in R(t_n)C\ \forall n\geq m. $
\end{assumption}

\begin{remark}\label{2016-01-19:00}
 Due to the fact that the closed convex sets in $H$ are the same in the weak and in the strong topology, 
then,
if
the following implication holds for some $\{t_n\}_{n\in \mathbb{N}}\subset (0,T]$ converging to $0$:
  \begin{equation}
    \label{eq:2015-11-07:05}
\{x_n\}_{n\in \mathbb{N}}\subset H \mbox{ bounded such that
$\{R(t_n)x_n\}_{n\in \mathbb{N}}$ is definitely null}
\quad \Longrightarrow\quad
x_n\rightharpoonup 0,
  \end{equation}
Assumption \ref{2015-11-06:07} holds true.
To see it, let ut suppose that
there exists $m\in \mathbb{N}$ such that 
 $R(t_n)u\in R(t_n)C$ for $n\geq m$.
This means that $R(t_n)(u-c_n)=0$ for $n\geq m$.
By \eqref{eq:2015-11-07:05}, $c_n \rightharpoonup u$,
hence $u$ belongs to $C$.

In particular, we notice that
\eqref{eq:2015-11-07:05} 
is satisfied whenever
 $R\colon \mathbb{R}^+\rightarrow L(H)$ is a $C_0$-semigroup on $H$.
In such a case, $R^*$ is a $C_0$-semigroup  (see \cite[pp.\ 43--44, Section 5.14]{Engel2000}, and then we can write, if
 $\{t_n\}_{n\in \mathbb{N}}$ is any bounded sequence converging to $0$ and if $\{x_n\}_{n\in \mathbb{N}}$ 
is such that $\{R(t_n)x_n\}_{n\in \mathbb{N}}$ is definitely null,
$$
\lim_{n\rightarrow \infty}\langle x_n,y\rangle
=
\lim_{n\rightarrow \infty}\langle x_n,R^*(t_n)y\rangle
=
\lim_{n\rightarrow \infty}\langle R(t_n)x_n,y\rangle
=0\qquad \forall y\in H.
$$ 
\end{remark}

\bigskip
In what follows, we denote by $\overline{{\mathcal{P}_T}}$ the completion of ${\mathcal{P}_T}$ with respect to 
$ \mathbb{P} \otimes m$.
If  $\Phi\in L(U,H)^{\Omega_T}$, we denote by $\Phi_R$ the map defined by
\begin{equation}
  \label{eq:2015-11-15:13}
       \Phi_R  \colon    \Omega_T \times [0,T] \rightarrow L(U,H),\ ((\omega,s),t)\mapsto \mathbf{1}_{[0,t)}(s) R(t-s)\Phi_s(\omega).
\end{equation}
     By saying 
that \emph{$\Phi\in L(U,H)^{\Omega_T}$ is strongly measurable}, we mean that 
$$
 (\Omega_T,\mathcal{P}_T)\rightarrow H,\ (\omega,t) \mapsto 
\Phi_t(\omega)u
$$
 is measurable, for all $u\in U$.
Similarly, 
 if $\Phi\in L(U,H)^{\Omega_T}$,
then
\emph{$\Phi_R$ is strongly measurable} if
$\Phi_R(\cdot)u$
is 
 $\mathcal{P}_T \otimes {\mathcal{B}_T}/\mathcal{B}_H$-measurable, for all $u\in U$.

\begin{proposition}\label{2015-11-07:11}
 Let $R\colon (0,T]\rightarrow L(H)$ be strongly continuous and let $\Phi\in L(U,H)^{\Omega_T}$. 
\begin{enumerate}[(i)]
\item\label{2015-11-07:08} If $\Phi$ is strongly measurable, then 
  $ \Phi_R$ is strongly measurable.
\item\label{2015-11-07:07}
Suppose that $R$ satisfies Assumption \ref{2015-11-06:07}.
 If $\Phi_R$ is strongly measurable, then there exists $\hat \Phi\in L(U,H)^{\Omega_T}$ and a $\mathbb{P} \otimes m$-null  set $A\in {\mathcal{P}_T}$  such that $\Phi=\hat \Phi$ on $\Omega_T\setminus A$  and   $\hat \Phi$ is strongly measurable.
\end{enumerate}
\end{proposition}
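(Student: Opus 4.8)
The plan is to treat the two assertions separately: the direct implication is an approximation argument, while the converse is the substantial direction and is where Assumption~\ref{2015-11-06:07} and the sequence $\{t_n\}$ must be used. For the first assertion, fix $u\in U$ and set $g(\omega,s)\coloneqq\Phi_s(\omega)u$, which by hypothesis is $\mathcal{P}_T$-measurable into the separable space $H$. First I would reduce to simple integrands: pick $\mathcal{P}_T$-simple $H$-valued functions $g_k=\sum_j\mathbf{1}_{A^k_j}h^k_j$, with $A^k_j\in\mathcal{P}_T$ and $h^k_j\in H$, converging pointwise to $g$. For each fixed $h\in H$ the map $(s,t)\mapsto R(t-s)h$ is continuous, hence Borel, on $\{s<t\}$ by strong continuity of $R$; combined with $\mathcal{P}_T$-measurability of the indicators this makes $((\omega,s),t)\mapsto\mathbf{1}_{\{s<t\}}R(t-s)g_k(\omega,s)$ measurable for $\mathcal{P}_T\otimes\mathcal{B}_T$. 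Since $R(t-s)\in L(H)$ whenever $s<t$, we have $R(t-s)g_k(\omega,s)\to R(t-s)g(\omega,s)$ pointwise, so the limit $((\omega,s),t)\mapsto\mathbf{1}_{[0,t)}(s)R(t-s)\Phi_s(\omega)u=\Phi_R(\cdot)u$ is measurable, which is precisely strong measurability of $\Phi_R$.

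For the converse, the idea is to recover the values of $\Phi$ by testing membership in closed balls through the operators $R(t_n)$. Evaluating the jointly measurable map $\Phi_R(\cdot)u$ along the graph $t=s+t_n$ — formally, precomposing with the $\mathcal{P}_T$-measurable map $(\omega,s)\mapsto((\omega,s),s+t_n)$, well defined for $s\le T-t_n$ — and noting that the indicator $\mathbf{1}_{[0,s+t_n)}(s)$ equals $1$, yields that $w_n(\omega,s)\coloneqq R(t_n)\Phi_s(\omega)u$ is $\mathcal{P}_T$-measurable on $\Omega\times[0,T-t_n]$. Since $t_n\to0$, every $s<T$ lies in $[0,T-t_n]$ for all large $n$, which is exactly the range relevant to the tail condition in the Assumption.

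The crucial geometric observation is that for every closed ball $\bar B=\bar B(q,\rho)$ the image $R(t_n)\bar B$ is strongly closed: $\bar B$ is bounded, closed and convex, hence weakly compact; $R(t_n)$ is weak-to-weak continuous, so $R(t_n)\bar B$ is weakly compact, thus weakly closed, and being convex it is strongly closed, hence Borel. Consequently $w_n^{-1}(R(t_n)\bar B)$ is $\mathcal{P}_T$-measurable. Now Assumption~\ref{2015-11-06:07} rephrases the pointwise condition $\Phi_s(\omega)u\in\bar B$ as the tail statement $\exists m\ \forall n\ge m:\ w_n(\omega,s)\in R(t_n)\bar B$, and reading this as a countable union over $m$ of countable intersections over $n\ge m$ shows that $\{(\omega,s):s<T,\ \Phi_s(\omega)u\in\bar B\}$ is $\mathcal{P}_T$-measurable.

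Since the closed balls with rational centre and radius generate $\mathcal{B}_H$, this proves that $(\omega,s)\mapsto\Phi_s(\omega)u$ is $\mathcal{P}_T$-measurable on $\Omega\times[0,T)$ for every $u\in U$. To finish I would take $A\coloneqq\Omega\times\{T\}$ (which is predictable, being $\bigcap_n\Omega\times(T-1/n,T]$, and $\mathbb{P}\otimes m$-null) and set $\hat\Phi\coloneqq\Phi$ on $\Omega_T\setminus A$ and $\hat\Phi\coloneqq0$ on $A$; then $\hat\Phi\in L(U,H)^{\Omega_T}$ agrees with $\Phi$ off $A$ and is strongly measurable, the single null set $A$ serving all $u$ simultaneously. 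I expect the main obstacle to be exactly this reconstruction: the operators $R(t_n)$ are neither injective nor boundedly invertible in general, so one cannot invert them, and the entire role of Assumption~\ref{2015-11-06:07} together with the closedness of $R(t_n)\bar B$ is to turn the set-membership test into one that is both correct and Borel-measurable, thereby bypassing any measurable-selection machinery.
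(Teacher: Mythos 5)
Your proof is correct, and in part (\ref{2015-11-07:07}) it takes a genuinely different --- and leaner --- route than the paper's. For part (\ref{2015-11-07:08}) the difference is minor: the paper discretizes the $t$-variable along partitions $\rho$, proves measurability for each frozen $t$ via Pettis's measurability theorem (testing against $R^*(t-s)h$), and then lets the mesh tend to $0$ using strong continuity; you instead approximate $\Phi u$ by $\mathcal{P}_T$-simple functions and exploit the joint continuity of $(s,t)\mapsto R(t-s)h$ on $\{s<t\}$. Both are approximation arguments (the paper approximates in $t$, you approximate in $(\omega,s)$) and both deliver the required joint measurability. The substantive divergence is in part (\ref{2015-11-07:07}). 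The paper works with the same objects as you --- the sets $F_n=\{(\omega,s)\colon R(t_n)\Phi_s(\omega)u\in R(t_n)C\}$, the closedness of $R(t_n)C$ via weak compactness, and the tail identity supplied by Assumption~\ref{2015-11-06:07} --- but it obtains measurability of $F_n\cap(\Omega\times[0,T-t_n])$ by forming $B_n\cap\Delta_n\subset \Omega_T\times[0,T]$, where $\Delta_n$ is the graph $\{t-s=t_n\}$, and invoking the projection theorem of Castaing--Valadier. That theorem only yields $\overline{\mathcal{P}_T}$-measurability (measurability in the completion), which then forces the paper's closing bookkeeping: one exceptional null set per direction $u$, a countable dense set $\{u_n\}\subset U$, and a union of null sets to manufacture $A$ and $\hat\Phi$. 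You observe instead that the projection of $B_n\cap\Delta_n$ onto $\Omega_T$ is exactly the preimage of $B_n$ under the graph map $(\omega,s)\mapsto((\omega,s),s+t_n)$, which is $\mathcal{P}_T$-measurable into $\mathcal{P}_T\otimes\mathcal{B}_T$; since a projection restricted to a graph is inverted by the graph map, no projection/selection theorem and no completion are needed. Consequently you get genuine $\mathcal{P}_T$-measurability of $\{\Phi u\in\bar B\}\cap\{s<T\}$, the exceptional set collapses to the single $u$-independent slice $A=\Omega\times\{T\}$, and the density argument in $U$ becomes unnecessary. In short, your argument is more elementary and proves a slightly stronger statement; the paper's projection-theorem step is heavier machinery that would be genuinely needed only if the set being projected were not a graph section, which here it always is.
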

\begin{proof}
\emph{(\ref{2015-11-07:08})}
Let $\Phi\in L(U,H)^{\Omega_T}$ be strongly measurable.
 Let 
$$
\rho\coloneqq\{0=t_0<\ldots< t_k=T\}\subset [0,T].
$$ 
 Denote 
$\delta(\rho)\coloneqq \sup_{i=0,\ldots,k-1}\{|t_{i+1}-t_{i}|\}$.
Define the function 
$$
\Phi_{R,\rho}\colon  \left(  \Omega_T \times [0,T],\mathcal{P}_T\otimes {\mathcal{B}_T} \right) \rightarrow L(U,H)
$$
by
\begin{equation*}
    \Phi_{R,\rho}((\omega,s),t)\coloneqq \sum_{i=0}^{k-1}\mathbf{1}_{[t_i,t_{i+1})}(t)\mathbf{1}_{[0,t_i)}(s)R(t_i-s)\Phi_{s}(\omega)
+\mathbf{1}_{\{T\}}(t)\mathbf{1}_{[0,T)}(s)R(T-s)\Phi_s(\omega).
\end{equation*}
For all $t\in [0,T]$ and 
$h\in H$, the map
$$
(\Omega_T,{\mathcal{P}_T})\rightarrow H,\ (\omega,s) \mapsto  \mathbf{1}_{[0,t)}(s)R^*(t-s)h
$$
is measurable, by strong continuity of $R$ and Pettis's measurability theorem. Moreover, for $u\in U$,
$$
(\Omega_T,{\mathcal{P}_T})\rightarrow H,\ (\omega,s) \mapsto  \Phi_s(\omega)u
$$
is measurable by assumption, we conclude
that, for $u\in U$ and $t\in[0,T]$,
$$
  \left( \Omega_T,{\mathcal{P}_T} \right) \rightarrow \mathbb{R},\ (\omega,s) \mapsto  \langle\mathbf{1}_{[0,t)}(s)R(t-s)\Phi_s(\omega)u,h\rangle_H
$$
is  measurable. Then, again by Pettis's measurablity theorem,
$$
  \left( \Omega_T,{\mathcal{P}_T} \right) \rightarrow H,\ (\omega,s) \mapsto  \mathbf{1}_{[0,t)}(s)R(t-s)\Phi_s(\omega)u
$$
is measurable, for every $u\in U$ and $t\in[0,T]$. Hence
 $ \Phi_{R,\rho}$ is strongly measurable.
By strong continuity of $R$, 
we have
$$
\lim_{\delta(\rho)\rightarrow 0} \Phi_{R,\rho}((\omega,s),t)u=\Phi_R((\omega,s),t)u\qquad \forall ((\omega,s),t)\in
 \Omega_T\times [0,T],
$$
for every $u\in U$. This shows that $ \Phi_R$ is strongly measurable.

\emph{(\ref{2015-11-07:07})}
Suppose that $\Phi_R$ is strongly measurable.
Let $u\in U$ and let $C\subset H$ be
closed, convex, and bounded. Let $\{t_n\}_{n\in \mathbb{N}}$ be as in 
Assumption \ref{2015-11-06:07}. For $n\in \mathbb{N}$, define
\begin{gather*}
  \Delta_n\coloneqq \{((\omega,s),t)\in \Omega_T\times [0,T]\colon t-s=t_n\}\\[2pt]
B_n\coloneqq
 \left\{ ((\omega,s),t)\in
 \Omega_T\times [0,T]\colon 
\Phi_R((\omega,s),t)u\in
 R(t_n)C
 \right\}\\[2pt]
F_n\coloneqq
 \left\{ (\omega,s)\in \Omega_T\colon 
R(t_n)\Phi_s(\omega)u\in
 R(t_n)C
 \right\}.
\end{gather*}
It is clear that $\Delta_n\in \mathcal{P}_T \otimes {\mathcal{B}_T}$. 
By weak compactness of $C$,
$R(t_n)C$
is closed.
Then, by strong measurability of $\Phi_R$, $B_n\in \mathcal{P}_T \otimes {\mathcal{B}_T}$, hence
 $B_n\medcap \Delta_n\in \mathcal{P}_T \otimes {\mathcal{B}_T}$.

Let $\pi_{\Omega_T}\colon \Omega_T\times [0,T]\rightarrow \Omega_T$ be the projection defined by
$$
\pi_{\Omega_T}((\omega,s),t)\coloneqq (\omega,s).
$$
By the projection theorem (see \cite[p.\ 75, Theorem III-23]{Castaing1977}), $\pi_{\Omega_T}(B_n\medcap \Delta_n)\in \overline{{\mathcal{P}_T}}$.
Notice that
\begin{equation}\label{2015-11-07:09}
  \begin{split}
  \pi_{\Omega_T}(B_n\medcap \Delta_n)&= 
  \left\{ 
    (\omega,s)\in\Omega_T\colon s+t_n\leq T\ \mbox{and}\ R(t_n)\Phi_s(\omega)u\in R(t_n)C \right\}\\
  &=F_n\medcap  \left(  \Omega\times [0,T-t_n] \right).
\end{split}
\end{equation}
By
Assumption \ref{2015-11-06:07}
and by recalling that $\{t_n\}_{n\in \mathbb{N}}\subset (0,T]$ converges to $0$, 
 we have
\begin{equation}
  \label{eq:2015-11-07:10}
  \{(\omega,s)\in\Omega_T\colon   \Phi_s(\omega)u\in C,\ s< T\}=\bigcup_{m\in \mathbb{N}}\bigcap_{n\geq m} ( F_n\medcap  \left(  \Omega\times [0,T-t_n] \right)  ).
\end{equation}
By
\eqref{2015-11-07:09}
and \eqref{eq:2015-11-07:10}, 
we conclude
$
\{(\omega,s)\in\Omega_T\colon \Phi_s(\omega)u\in C,\ s<T\}\in \overline{\mathcal{P}_T}
$.
The slice $\{(\omega,T)\in\Omega_T\colon \Phi_T(\omega)u\in C\}$ is a $\mathbb{P} \otimes m$-null set.
Then 
$$
\{(\omega,s)\in\Omega_T\colon \Phi_s(\omega)u\in C\}\in \overline{\mathcal{P}_T}.
$$
Since this holds for every closed, convex, bounded set $C$, hence for balls, and since $H$ is separable, we have that  $\Phi u$ is $\overline{{\mathcal{P}_T}}/\mathcal{B}_H$-measurable,
for every $u\in U$. 

Now let $\{u_n\}_{n\in \mathbb{N}}$  be a dense subset of $U$.
Since $\overline{{\mathcal{P}_T}}$ is the completion of ${\mathcal{P}_T}$ with respect to $ \mathbb{P} \otimes  m$, and since $H$ is separable,
 for every $n\in \mathbb{N}$ there exists $A_n\in {\mathcal{P}_T}$ such that $ \mathbb{P} \otimes m(A_n)=0$ and  $\mathbf{1}_{A_n}\Phi u_n$ is ${\mathcal{P}_T}/\mathcal{B}_H$-measurable. Let $A\coloneqq \medcup_{n\in \mathbb{N}}A_n$. Then $A\in {\mathcal{P}_T}$, $ \mathbb{P}\otimes m(A)=0$, and $\mathbf{1}_A\Phi u_n$ is ${\mathcal{P}_T}/\mathcal{B}_H$-measurable for every $n\in \mathbb{N}$. Since $\Phi_s(\omega)\in L(U,H)$ for every $(\omega,s)\in\Omega_T$, by density of $\{u_n\}_{n\in \mathbb{N}}$ we conclude that $\mathbf{1}_A \Phi u$ is ${\mathcal{P}_T}/\mathcal{B}_H$-measurable for every $u\in U$. This concludes the proof of
\emph{(\ref{2015-11-07:07})} and of the proposition.
\end{proof}

\noindent  We will make use of the following lemma, whose proof can be found in \cite[Ch.\ 1]{Fabbri}.
\begin{lemma}\label{2015-11-08:02}
Let $(G,\mathcal{G})$ be a measurable space.   
Let
$f\colon (G,\mathcal{G})\rightarrow L_2(U,H)$.
Then
 $f(\cdot)u$ is $\mathcal{G}/\mathcal{B}_H$-measurable, for all $u\in U$, if and only if
$f$ 
is $\mathcal{G}/\mathcal{B}_{L_2(U,H)}$-measurable.
\end{lemma}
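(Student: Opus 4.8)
The plan is to prove the nontrivial implication, namely that if $f(\cdot)u$ is $\mathcal{G}/\mathcal{B}_H$-measurable for every $u\in U$, then $f$ itself is $\mathcal{G}/\mathcal{B}_{L_2(U,H)}$-measurable. The reverse implication is immediate: if $f$ is measurable into $L_2(U,H)$, then composing with the continuous (hence Borel) evaluation map $L_2(U,H)\rightarrow H$, $T\mapsto Tu$, yields measurability of $f(\cdot)u$ for each fixed $u$. The key observation driving the forward direction is that $L_2(U,H)$ is a separable Hilbert space, so by Pettis's measurability theorem it suffices to check that $\langle f(\cdot),\varphi\rangle_{L_2(U,H)}$ is $\mathcal{G}$-measurable for every $\varphi$ in a countable total subset of $L_2(U,H)$, since separability already guarantees essential separable-valuedness.

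First I would fix an orthonormal basis $\{e_j\}_{j\in\mathbb{N}}$ of $U$ and an orthonormal basis $\{h_k\}_{k\in\mathbb{N}}$ of $H$. Then the rank-one operators $h_k\otimes e_j$ (sending $v\mapsto \langle v,e_j\rangle_U\, h_k$) form an orthonormal basis of $L_2(U,H)$, and in particular a countable total set. For such a basis element the Hilbert-Schmidt inner product unfolds as
\begin{equation*}
\langle f(x),\,h_k\otimes e_j\rangle_{L_2(U,H)}=\langle f(x)e_j,\,h_k\rangle_H .
\end{equation*}
By hypothesis the map $x\mapsto f(x)e_j$ is $\mathcal{G}/\mathcal{B}_H$-measurable, and $v\mapsto\langle v,h_k\rangle_H$ is continuous on $H$, so the composition $x\mapsto\langle f(x)e_j,h_k\rangle_H$ is $\mathcal{G}$-measurable. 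Thus every coordinate of $f$ against the chosen countable total family is measurable.

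It then remains to upgrade these coordinatewise measurabilities to genuine strong measurability of $f$ into $L_2(U,H)$. Since $L_2(U,H)$ is separable, I would invoke Pettis's measurability theorem: a function into a separable Banach space is strongly measurable precisely when it is weakly measurable, i.e.\ when $x\mapsto\ell(f(x))$ is measurable for every $\ell$ in the dual. For the Hilbert space $L_2(U,H)$ the dual is represented by the inner product, and because the span of $\{h_k\otimes e_j\}$ is dense, a standard approximation (each functional is a norm-limit of finite linear combinations of the $\langle\,\cdot\,,h_k\otimes e_j\rangle$, and pointwise limits of measurable scalar functions are measurable) shows that $x\mapsto\langle f(x),\psi\rangle$ is measurable for every $\psi\in L_2(U,H)$. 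Pettis's theorem then yields $\mathcal{G}/\mathcal{B}_{L_2(U,H)}$-measurability of $f$. The main point requiring care—rather than a genuine obstacle—is the passage from measurability against the countable total family to measurability against all of $L_2(U,H)$; but this is precisely where separability of $L_2(U,H)$ is used, and it is exactly the same Pettis-plus-density argument already employed in the proof of Lemma~\ref{2015-08-15:00} and in Proposition~\ref{2015-11-07:11}.
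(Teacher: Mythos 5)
Your proof is correct. Note, however, that the paper does not prove this lemma at all: it simply cites \cite[Ch.~1]{Fabbri} for the proof, so there is no in-paper argument to compare against. Your argument is the standard one and presumably what the cited reference does: the easy direction via continuity of the evaluation map $T\mapsto Tu$ on $L_2(U,H)$; the converse by expanding against the orthonormal basis $\{h_k\otimes e_j\}$ of $L_2(U,H)$, using the identity $\langle f(x),h_k\otimes e_j\rangle_{L_2(U,H)}=\langle f(x)e_j,h_k\rangle_H$ to get measurability of countably many coordinates, upgrading to all functionals by density and pointwise limits, and concluding by Pettis's theorem (in the measure-free form: for a separable Banach space, $\sigma(X^*)=\mathcal{B}_X$, so weak measurability implies Borel measurability). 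This is also consistent with how the paper itself invokes Pettis elsewhere (Lemma~\ref{2015-08-15:00}, Proposition~\ref{2015-11-07:11}), and separability of $L_2(U,H)$ makes the separable-range hypothesis automatic, so there is no gap.
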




%

Under  Assumption~\ref{2015-11-06:07},
the following theorem characterizes those functions $\Phi\in L(U,H)^{\Omega_T}$ for which 
$\Phi_R$ belongs to $L^0_{\mathcal{P}_T \otimes \mathcal{B}_T}(L_2(U,H))$.

\begin{theorem}\label{2017-05-07:12}
 Let $R\colon (0,T]\rightarrow L(H)$ be strongly continuous and let $\Phi\in L(U,H)^{\Omega_T}$. 
\begin{enumerate}[(i)]
\item\label{2015-11-07:13} If $\Phi$ is strongly measurable
and if $\mathbf{1}_{[0,t)}(s)R(t-s)\Phi_s(\omega)\in L_2(U,H)$ for all $((\omega,s),t)\in  \Omega_T\times [0,T]$, then
  $ \Phi_R$ is measurable as an $L_2(U,H)$-valued map (that is when $L_2(U,H)$ is endowed with its Borel $\sigma$-algebra).
\item\label{2015-11-07:12}
Suppose that $R$ satisfies Assumption \ref{2015-11-06:07}.
 If $\Phi_R$ has values in $L_2(U,H)$ and if
it
is measurable as an $L_2(U,H)$-valued map, then there exists $\hat \Phi\in L(U,H)^{\Omega_T}$ and a $\mathbb{P} \otimes m$-null  set $A\in {\mathcal{P}_T}$  such that $\Phi=\hat \Phi$ on $\Omega_T\setminus A$,   $\hat \Phi$ is strongly measurable, and  $\mathbf{1}_{[0,t)}(s)R(t-s)\hat \Phi_s(\omega)\in L_2(U,H)$ for all  $((\omega,s),t)\in \Omega_T\times [0,T]$.
\end{enumerate}
\end{theorem}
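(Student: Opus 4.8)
The plan is to prove both directions purely by chaining together the two facts already available for $\Phi_R$: the transfer of strong measurability in Proposition~\ref{2015-11-07:11}, and the equivalence of Lemma~\ref{2015-11-08:02} between strong measurability and Borel measurability into $L_2(U,H)$. I would apply that lemma throughout with the measurable space $(G,\mathcal{G})=(\Omega_T\times[0,T],\mathcal{P}_T\otimes\mathcal{B}_T)$, so that ``$\Phi_R$ is strongly measurable'' (i.e.\ $\Phi_R(\cdot)u$ is $\mathcal{P}_T\otimes\mathcal{B}_T/\mathcal{B}_H$-measurable for every $u\in U$) and ``$\Phi_R$ is $\mathcal{P}_T\otimes\mathcal{B}_T/\mathcal{B}_{L_2(U,H)}$-measurable'' become two equivalent statements, as soon as $\Phi_R$ genuinely takes values in $L_2(U,H)$. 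Note that $L_2(U,H)$ is a separable Hilbert space here, so no separability issue obstructs the use of the lemma.

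For part (i), I would first invoke Proposition~\ref{2015-11-07:11}(i): since $\Phi$ is strongly measurable, so is $\Phi_R$, that is, $\Phi_R(\cdot)u$ is $\mathcal{P}_T\otimes\mathcal{B}_T/\mathcal{B}_H$-measurable for every $u\in U$. By the standing hypothesis $\mathbf{1}_{[0,t)}(s)R(t-s)\Phi_s(\omega)\in L_2(U,H)$ for every $((\omega,s),t)$, the map $\Phi_R$ is honestly $L_2(U,H)$-valued, so the ``only if'' direction of Lemma~\ref{2015-11-08:02} applies and yields that $\Phi_R$ is $\mathcal{P}_T\otimes\mathcal{B}_T/\mathcal{B}_{L_2(U,H)}$-measurable, which is exactly the claim.

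For part (ii), I would run the same equivalence backwards. Since $\Phi_R$ is by hypothesis $L_2(U,H)$-valued and measurable as an $L_2(U,H)$-valued map, the ``if'' direction of Lemma~\ref{2015-11-08:02} gives that $\Phi_R(\cdot)u$ is $\mathcal{P}_T\otimes\mathcal{B}_T/\mathcal{B}_H$-measurable for every $u\in U$, i.e.\ $\Phi_R$ is strongly measurable. As $R$ satisfies Assumption~\ref{2015-11-06:07}, Proposition~\ref{2015-11-07:11}(ii) then produces a strongly measurable $\hat\Phi\in L(U,H)^{\Omega_T}$ together with a $\mathbb{P}\otimes m$-null set $A\in\mathcal{P}_T$ such that $\Phi=\hat\Phi$ on $\Omega_T\setminus A$; this already delivers every clause of the conclusion except the $L_2(U,H)$-membership of $\mathbf{1}_{[0,t)}(s)R(t-s)\hat\Phi_s(\omega)$.

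The only delicate point, and the step I expect to be the main obstacle, is precisely this last membership, because it is asserted for \emph{all} $((\omega,s),t)$ whereas the hypothesis controls $\Phi$, not $\hat\Phi$, on the null set $A$. I would resolve it by arranging the modification to vanish on $A$, i.e.\ replacing $\hat\Phi$ by $\mathbf{1}_{\Omega_T\setminus A}\hat\Phi$: this stays in $L(U,H)^{\Omega_T}$, remains strongly measurable since $A\in\mathcal{P}_T$, and still coincides with $\Phi$ off $A$. Then for $(\omega,s)\in\Omega_T\setminus A$ one has $\mathbf{1}_{[0,t)}(s)R(t-s)\hat\Phi_s(\omega)=\mathbf{1}_{[0,t)}(s)R(t-s)\Phi_s(\omega)\in L_2(U,H)$ by the hypothesis that $\Phi_R$ is $L_2(U,H)$-valued, while for $(\omega,s)\in A$ the expression equals $0\in L_2(U,H)$. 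This exhausts all $((\omega,s),t)$ and completes the proof.
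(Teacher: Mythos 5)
Your proposal is correct and follows essentially the same route as the paper, whose entire proof is the one-line instruction to apply Proposition~\ref{2015-11-07:11} together with Lemma~\ref{2015-11-08:02}; you simply spell out the chaining in both directions. Your extra step of replacing $\hat\Phi$ by $\mathbf{1}_{\Omega_T\setminus A}\hat\Phi$ to secure the $L_2(U,H)$-membership clause for \emph{all} $((\omega,s),t)$ is a detail the paper leaves implicit (its construction in Proposition~\ref{2015-11-07:11} already yields a modification vanishing on $A$), and it is handled correctly.
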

\begin{proof}
Apply
Proposition~ \ref{2015-11-07:11}
and  Lemma~\ref{2015-11-08:02}.
\end{proof}







\begin{example}\label{2016-01-21:00}
Let $Q$ be a positive self-adjoint operator
of trace class in $H$ and let $W$ be a $U$-valued $Q$-Wiener process with respect to $(\Omega,\mathcal{F},\mathbb{F},\mathbb{P})$.
Let $U_0\coloneqq Q^{1/2}(U)$ be the Hilbert space isometric to $U$ through $Q^{-1/2}\colon U_0\rightarrow U$.
By
\cite[p.\ 114, Theorem 4.37]{DaPrato2014}, for $p\geq 2$,
 the stochastic integral is a linear and continuous map
$$
 \mathfrak  I_W\colon L^{p,2}_{\mathcal{P}_T}(L_2(U_0,H))\rightarrow
\mathcal{L}^p_{\mathcal{P}_T}(\Cb{H}),\ \Psi \mapsto \Psi\cdot W\coloneqq \int_0^\cdot \Psi_sdW_s.
$$
Let $R$ be as in
Assumption~\ref{2015-11-06:07}.
Let $\Phi\in L(U_0,H)^{\Omega_T}$ be strongly measurable and such that $R(t-s)\Phi_s(\omega)\in L_2(U_0,H)$ for
$(\omega,s)\in\Omega_T$, $t\in (s,T]$.
Then, by Theorem~\ref{2017-05-07:12}\emph{(\ref{2015-11-07:13})}, $\Phi_R\in L^0_{\mathcal{P}_T \otimes \mathcal{B}_T}(L_2(U_0,H))$.
 If $|\Phi_R|_{p,2,p}<\infty$, then we can apply 
Theorem~\ref{2017-05-07:11}, according to which the process
$$
 \left\{ \int_0^t R(t-s)\Phi_sdW_s  \right\}_{t},
$$
which is well-defined for a.e.\ $t\in[0,T]$, has an $ \mathcal{F}_T \otimes \mathcal{B}_T$-jointly measurable version.
\end{example}


\subsection{Continuous  version}
\label{2016-01-22:02}

In this section we 
review the factorization method used to show
existence of continuous version
 of stochastic  convolutions made with respect to a $C_0$-semigroup.

 \begin{notation}
   Throughout this section
\begin{itemize}
\itemsep=-1mm
\item $S$ denotes a strongly continuous semigroup on $H$ and $M\coloneqq \sup_{t\in[0,T]}|S_t|_{L(H)}$;
\item $\mathbb{W}\coloneqq \Cb{H}$;
\item for $\beta\in (0,1)$, $c_\beta$ denotes the number
$
c_\beta\coloneqq  \left( \int_0^1(1-w)^{\beta-1}w^{-\beta}dw  \right) ^{-1}$.
\end{itemize}
\end{notation}
\noindent As noticed in Remark \ref{2016-01-19:00}, $S$ verifies
 Assumption 
\ref{2015-11-06:07}.

\medskip
The factorization method relies on the semigroup property of $S$ and on the fact that continuous linear operator commutes with stochastic integral.
We  rephrase this commutativity assumption in our setting through the following


\begin{assumption}\label{2016-01-15:01}
 Let $p,q,r\in [1,\infty)$, and let
$$
   \mathfrak I\colon  L^{p,q}_{{\mathcal{P}_T}}  \left(  L_2(U,H) \right)  \mapsto \mathcal{L}^ r_{\mathcal{F}_T \otimes \mathcal{B}_T}(\mathbb{W})
$$
 be a linear and continuous
  operator such that
  \begin{equation}\label{2016-01-17:04}
    Q  \left( \mathfrak  I\Phi \right) = \mathfrak I (  Q\Phi )\ \mbox{in }\mathcal{L}^{r}_{\mathcal{F}_T \otimes \mathcal{B}_T}(\mathbb{W})\
(\footnote{$Q$ applied to a process $\Phi$ means the pointwise composition $Q(\Phi_t(\omega))$, for $(\omega,t)\in\Omega_T$.})
\qquad
\forall Q\in L(H).
  \end{equation}
\end{assumption}


\bigskip
\noindent For $p,q,r\in[1,\infty)$ and $\beta\in [0,1)$, $\Lambda^{p,q,r}_{\mathcal{P}_T,S,\beta}(L(U,H))$ denotes the vector space of equivalence classes of strongly measurable functions 
$\Phi\in L(U,H)^{\Omega_T}$ 
such that
\begin{equation}\label{2017-05-08:02}
\left( 
\int_0^T \left( \int_0^t
(t-s)^{-\beta q}
 \left(  \mathbb{E} \left[ |S(t-s)\Phi_s|_{L_2(U,H)}^p \right]  \right) ^{q/p}ds \right)^{r/q} dt
 \right) ^{1/r}<\infty.
\end{equation}
Two functions $\Phi_1$, $\Phi_2$, are in the same class if the quantity \eqref{2017-05-08:02} is $0$ for $\Phi=\Phi_1-\Phi_2$.
This implies, for all $u\in U$, for
$\mathbb{P} \otimes m$-a.e.\ $(\omega,s)\in\Omega_T$,
$$
\mathbf{1}_{[0,t)}(s)S(t-s)(\Phi_1)_s u=
\mathbf{1}_{[0,t)}(s)S(t-s)(\Phi_2)_s u\qquad 
 m\mbox{-a.e.\ }t\in[0,T],
$$
hence, by strong continuity of $S$, for all $u\in U$,
$$
(\Phi_1)_s u=
(\Phi_2)_s u\qquad 
\mathbb{P} \otimes m\mbox{-a.e.\ }(\omega,s)\in\Omega_T.
$$
By separability of $U$ we conclude
that $\Phi_1=\Phi_2$ in $\Lambda^{p,q,r}_{\mathcal{P}_T,S,\beta}(L(U,H))$ if and only if
 $(\Phi_1)_s(\omega)=(\Phi_2)_s(\omega)$ in $L(U,H)$ $\mathbb{P} \otimes m$-a.e.\ $(\omega,s)\in\Omega_T$.

\bigskip
For $\Phi\in \Lambda^{p,q,r}_{\mathcal{P}_T,S,\beta}(L(U,H))$, we define, for all 
$(\omega,s)\in\Omega_T$ and $ t\in[0,T]$,
\begin{gather*}
\label{2017-05-08:12}
\Phi_{S,\beta}((\omega,s),t)\coloneqq \mathbf{1}_{[0,t)}(s)(t-s)^{-\beta}S(t-s)\Phi_s(\omega)\\[5pt]
\Phi_{S}((\omega,s),t)\coloneqq \mathbf{1}_{[0,t)}(s)S(t-s)\Phi_s(\omega),
\end{gather*}
By Theorem~\ref{2017-05-07:12}\emph{(\ref{2015-11-07:13})},
$\Phi_{S,\beta}\in L^0_{\mathcal{P}_T \otimes \mathcal{B}_T}(L_2(U,H))$, and 
\eqref{2017-05-08:02} can be written as 
\begin{equation}
  \label{eq:2017-05-24:01}
  |\Phi_{S,\beta}|_{p,q,r}< \infty.
\end{equation}
Then, through the well-defined map 
$$
\Lambda^{p,q,r}_{\mathcal{P}_T,S,\beta}(L(U,H))\rightarrow L^{p,q,r}_{\mathcal{P}_T \otimes \mathcal{B}_T}(L_2(U,H)),\ 
\Phi \mapsto \Phi_{S,\beta},
$$
 $\Lambda^{p,q,r}_{\mathcal{P}_T,S,\beta}(L(U,H))$ is identified  with a subspace of $L^{p,q,r}_{\mathcal{P}_T \otimes \mathcal{B}_T}(L_2(U,H))$.
In particular, the map
$$
\Lambda^{p,q}_{\mathcal{P}_T,S,\beta}(L(U,H))\rightarrow \mathbb{R}^+,\ \Phi \mapsto |\Phi_{S,\beta}|_{p,q,r}
$$
is a norm.
In what follows we always consider 
$\Lambda^{p,q}_{\mathcal{P}_T,S,\beta}(L(U,H))$  endowed with the norm $|\#_{S,\beta}|_{p,q,r}$.

\bigskip
Again by Theorem~\ref{2017-05-07:12}\emph{(\ref{2015-11-07:13})},
$\Phi_S\in L^0_{\mathcal{P}_T \otimes \mathcal{B}_T}(L_2(U,H))$. 
Moreover, for all $t'\in[0,T]$, we have, by 
applying Minkowski's inequality for integrals 
(see \cite[p.\ 194, 6.19]{Folland1999}),
\begin{equation*}
  \begin{split}
    |\Phi_S(\cdot,t)|_{p,q}
    &=
c_\beta 
 \left(    \int_0^{t'} \left(
 \int_s^{t'}(t'-t)^{\beta-1}(t-s)^{-\beta} 
 \left( \mathbb{E} \left[ 
       |S(t'-s)\Phi_s|_{L_2(U,H)}^p \right]  \right) ^{1/p}dt
 \right) ^q
ds \right) ^{1/q}\\
&\leq c_\beta \int_0^{t'}
(t'-t)^{\beta-1}
 \left( 
\int_0^t
(t-s)^{-\beta q}
 \left( 
\mathbb{E} \left[ |S(t'-s)\Phi_s|_{L_2(U,H)}^p \right] 
 \right) ^{q/p}
 ds
\right)^{1/q}dt.
  \end{split}
\end{equation*}
Now, if we take $r>1$ and $\beta\in (1/r,1)$,  by
applying  H\"older's inequality to the last term and  writing $S(t'-s)=S(t'-t)S(t-s)$,
\begin{equation}
  \label{eq:2017-05-09:01}
    |\Phi_S(\cdot,t')|_{p,q}\leq
 c_\beta M  \left( \int_0^{T}w^{\frac{(\beta-1)r}{r-1}} dw\right) ^{(r-1)/r}
|\Phi_{S,\beta}|_{p,q,r}< \infty.
\end{equation}
This shows that
\begin{equation}
  \label{eq:2017-05-09:02}
  \Phi_S(\cdot,t')\in L^{p,q}_{\mathcal{P}_T}(L_2(U,H)),
\  \forall t'\in[0,T]. 
\end{equation}

\begin{theorem}\label{2016-01-17:06}
Let $p,q\in[1,\infty)$, $r\in (1,\infty)$,
 $\beta\in(1/r,1)$.
Let $ \mathfrak I$ be as in 
Assumption~\ref{2016-01-15:01}.
Then there exists a unique linear and continuous function
\begin{equation}
  \label{eq:2017-05-08:20}
  \mathbf{C}\colon
 \Lambda^{p,q,r}_{\mathcal{P}_T,S,\beta}(L(U,H))\rightarrow
\mathcal{L}^r_{\mathcal{F}_T \otimes \mathcal{B}_T}(
\mathbb{W})
\end{equation}
such that,
for all $\Phi\in \Lambda^{p,q,r}_{\mathcal{P}_T,S,\beta}(L(U,H))$,
for all $t\in[0,T]$,
\begin{equation}
  \label{2017-05-08:01}
   \left( \mathfrak I \left( \mathbf{1}_{[0,t)}(\cdot)S(t-\cdot)\Phi \right)  \right) _t= \left( \mathbf{C}(\Phi) \right) _t\qquad \mathbb{P}\mbox{-a.e..}
\end{equation}
The operator norm of $\mathbf{C}$ is bounded by a constant 
depending only on $\beta$, $r$, $T$, $M$, and on the operator norm of $ \mathfrak I$.
\end{theorem}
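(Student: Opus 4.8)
The plan is to carry out the classical factorization method of Da Prato--Zabczyk inside the present operator framework, using the stochastic Fubini theorem (Theorem~\ref{2015-09-21:01}) to exchange $\mathfrak I$ with the deterministic fractional integration and the commutativity hypothesis \eqref{2016-01-17:04} to move the semigroup across $\mathfrak I$. Throughout I would rely on the Euler--Beta identity $c_\beta\int_s^t(t-\sigma)^{\beta-1}(\sigma-s)^{-\beta}\,d\sigma=1$ for $0\le s<t$, which is the definition of $c_\beta$ after the substitution $\sigma=s+w(t-s)$, and on the semigroup law $S(t-\sigma)S(\sigma-s)=S(t-s)$.

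Since $\Phi\in\Lambda^{p,q,r}_{\mathcal{P}_T,S,\beta}(L(U,H))$ is by \eqref{eq:2017-05-24:01} exactly the statement $\Phi_{S,\beta}\in L^{p,q,r}_{\mathcal P_T\otimes\mathcal B_T}(L_2(U,H))$, I would first apply Theorem~\ref{2015-09-21:01} with $G=[0,T]$, $\mu=m$, $\mathcal D=\mathcal P_T$, $E=L_2(U,H)$, $\mathbb T=\mathbb{W}$, $\mathcal P'=\mathcal F_T\otimes\mathcal B_T$, $\mathfrak L=\mathfrak I$ and $g(\sigma)=\Phi_{S,\beta}(\cdot,\sigma)$ (which lies in $L^1([0,T],L^{p,q}_{\mathcal P_T}(L_2(U,H)))$ as $r\ge1$ and $m$ is finite). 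This yields a jointly measurable $X_2((\omega,u),\sigma)$ with $u\mapsto X_2((\omega,u),\sigma)\in\mathbb{W}$ continuous for a.e.\ $\sigma$ and $X_2(\cdot,\sigma)=\mathfrak I(\Phi_{S,\beta}(\cdot,\sigma))$ in $\mathcal L^r_{\mathcal F_T\otimes\mathcal B_T}(\mathbb{W})$ for a.e.\ $\sigma$. I would then define
\[
\mathbf C(\Phi)_t(\omega):=c_\beta\int_0^t(t-\sigma)^{\beta-1}S(t-\sigma)\,X_2((\omega,t),\sigma)\,d\sigma ,
\]
the deterministic fractional convolution read along the path-time $u=t$; note that, lacking any progressive measurability of $\mathfrak I$, it is the value $X_2((\omega,t),\sigma)$ at time $t$, and not a diagonal value, that must appear here.

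Next I would show $\mathbf C(\Phi)\in\mathcal L^r_{\mathcal F_T\otimes\mathcal B_T}(\mathbb{W})$ with the asserted bound. Writing $N_\sigma(\omega):=\sup_{u\in[0,T]}|X_2((\omega,u),\sigma)|_H$, Hölder's inequality with $r'=r/(r-1)$ and the condition $\beta>1/r$ (so that $(\beta-1)r'>-1$) give $\sup_t|\mathbf C(\Phi)_t(\omega)|_H\le c_\beta M\big(\int_0^T\tau^{(\beta-1)r'}d\tau\big)^{1/r'}\big(\int_0^T N_\sigma(\omega)^r\,d\sigma\big)^{1/r}$; taking $L^r(\Omega)$ and using $\int_0^T\mathbb E[N_\sigma^r]\,d\sigma=\int_0^T|\mathfrak I(\Phi_{S,\beta}(\cdot,\sigma))|^r_{\mathcal L^r_{\mathcal F_T\otimes\mathcal B_T}(\mathbb{W})}\,d\sigma\le|\mathfrak I|^r\,|\Phi_{S,\beta}|^r_{p,q,r}$ bounds the operator norm of $\mathbf C$ by a constant depending only on $\beta,r,T,M$ times $|\mathfrak I|$. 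Continuity of the paths is the classical fact that the fractional integral sends $L^r(0,T;H)$ into $C([0,T];H)$ when $\beta>1/r$; the one new point is the extra continuous dependence of the integrand on the outer variable $t$ through $X_2((\omega,t),\sigma)$, which I would absorb by splitting $\mathbf C(\Phi)_t-\mathbf C(\Phi)_{t_0}$ into a term with $X_2((\omega,t_0),\cdot)$ frozen (handled by the standard lemma) and a remainder bounded, via Hölder, by $\big(\int_0^T\tau^{(\beta-1)r'}d\tau\big)^{1/r'}\big(\int_0^t|X_2((\omega,t),\sigma)-X_2((\omega,t_0),\sigma)|_H^r\,d\sigma\big)^{1/r}$, whose last factor tends to $0$ by dominated convergence (pointwise in $\sigma$ from path-continuity in the first slot, dominated by $2N_\sigma\in L^r$). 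Joint measurability, linearity, and independence of the representatives of $\Phi$ and of $X_2$ are routine.

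The crux is the identity \eqref{2017-05-08:01}. Fixing $t$, I would apply Theorem~\ref{2015-09-21:01} a second time, now with $G=[0,t]$, $\mu(d\sigma)=c_\beta(t-\sigma)^{\beta-1}d\sigma$ (finite since $\beta-1>-1$) and $g_t(\sigma)=S(t-\sigma)\Phi_{S,\beta}(\cdot,\sigma)$. By the semigroup law and the Beta identity the parameter integral $Y=\int_{[0,t]}g_t\,d\mu$ equals $\Phi_S(\cdot,t)=\mathbf 1_{[0,t)}(\cdot)S(t-\cdot)\Phi$, which lies in $L^{p,q}_{\mathcal P_T}(L_2(U,H))$ by \eqref{eq:2017-05-09:02}; the commutativity \eqref{2016-01-17:04} with $Q=S(t-\sigma)$ gives $\mathfrak I(g_t(\sigma))=S(t-\sigma)\mathfrak I(\Phi_{S,\beta}(\cdot,\sigma))$, so by \eqref{2017-05-07:00} the process produced by this second application agrees, for a.e.\ $\sigma$ and a.e.\ $(\omega,u)$, with $S(t-\sigma)X_2((\omega,u),\sigma)$. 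The Fubini conclusion \eqref{eq:2015-09-15:00}, evaluated at $u=t$, then reads $(\mathfrak I(\Phi_S(\cdot,t)))_t(\omega)=c_\beta\int_0^t(t-\sigma)^{\beta-1}S(t-\sigma)X_2((\omega,t),\sigma)\,d\sigma=\mathbf C(\Phi)_t(\omega)$ for $\mathbb P$-a.e.\ $\omega$, which is \eqref{2017-05-08:01}; uniqueness follows because two continuous-path operators satisfying \eqref{2017-05-08:01} coincide at each $t$ almost surely, hence are indistinguishable by path-continuity over a countable dense set of times. The main obstacle I anticipate is bookkeeping this second Fubini: checking the $L^1(G;L^{p,q}_{\mathcal P_T})$-integrability of $g_t$ against the singular weight, and matching the representative of its image with $S(t-\sigma)X_2$ so that the value at $u=t$ lands exactly on the integrand used to define $\mathbf C(\Phi)$.
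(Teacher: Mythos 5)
Your proposal is correct and follows the same factorization strategy as the paper --- the stochastic Fubini theorem (Theorem~\ref{2015-09-21:01}) to exchange $\mathfrak I$ with the deterministic fractional integration, the commutativity hypothesis \eqref{2016-01-17:04} to move the semigroup across $\mathfrak I$, and the Euler--Beta identity to reconstruct $\mathbf{1}_{[0,t)}S(t-\cdot)\Phi$ --- but the two executions differ at a point that matters. The paper fixes $t'$, absorbs both the weight $c_\beta(t'-t)^{\beta-1}$ and the semigroup into a single integrand $\Phi^{(t')}_{S,\beta}$, applies Fubini with Lebesgue measure, and then invokes Theorem~\ref{2017-05-07:11} to express everything through the \emph{diagonal} process $\Sigma^{\Phi_{S,\beta}}_\sigma=(\mathfrak I(\Phi_{S,\beta}(\cdot,\sigma)))_\sigma$; the candidate \eqref{eq:2015-11-04:10B} is then a genuine fractional integral of an $L^r$ path, so its continuity is immediate from \cite[p.\ 129, Proposition 5.9]{DaPrato2014}. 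You instead run Fubini twice (once unweighted, to build the jointly measurable kernel $X_2$, and once with the singular weight placed in the measure $\mu$), and you define $\mathbf C(\Phi)_t$ through $X_2((\omega,t),\sigma)$, the path value at the outer time $t$ rather than the diagonal; this forces your extra splitting/dominated-convergence argument for path continuity (which is sound), and your operator-norm bound comes out with the same dependence on $\beta$, $r$, $T$, $M$, $|\mathfrak I|$. What your choice buys is robustness: the paper's identification \eqref{eq:2017-05-08:13} equates $X_2((\omega,t'),t)$, which by \eqref{eq:2017-05-08:05} is the evaluation of $\mathfrak I(g(t))$ at path time $t'$, with the right-hand side of \eqref{2017-05-08:11}, which is its evaluation at path time $t$; this silently uses that $\mathfrak I$ applied to a process vanishing on $[t,T]$ has paths constant on $[t,T]$ --- automatic for true stochastic integrals, but not contained in Assumption~\ref{2016-01-15:01}, exactly the point you flag when you insist that ``it is the value $X_2((\omega,t),\sigma)$ at time $t$, and not a diagonal value, that must appear.'' Your non-diagonal formula never needs that property, so your argument runs under precisely the stated hypotheses; conversely, if one grants $\mathfrak I$ that natural localization property, the paper's diagonal formula is the cleaner route, since it recovers the literal factorization formula and gets continuity for free.
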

\begin{proof}
Let $\Phi\in \Lambda^{p,q,r}_{\mathcal{P}_T,S,\beta}(L(U,H))$.
First notice that the left-hand side of
\eqref{2017-05-08:01}
is meaningful because of \eqref{eq:2017-05-09:02}.
We now construct $\textbf{C}(\Phi)$.
Fix $t'\in[0,T]$, and define
$$
\Phi^{(t')}_{S,\beta}((\omega,s),t)\coloneqq c_\beta \mathbf{1}_{[0,t')}(t)(t'-t)^{\beta-1}
\mathbf{1}_{[0,t)}(s)(t-s)^{-\beta}
S(t'-s)\Phi_s(\omega)\qquad (\omega,s)\in\Omega_T,\ t\in[0,T].
$$
By Theorem~\ref{2017-05-07:12}\emph{(\ref{2015-11-07:13})},
 $\Phi^{(t')}_{S,\beta}\in L^0_{\mathcal{P}_T \otimes \mathcal{B}_T}(L_2(U,H))$.
Moreover, 
  \begin{equation*}
    \begin{split}
|\Phi^{(t')}_{S,\beta}|_{p,q,1}
&=c_\beta\int_0^{t'} (t'-t)^{\beta-1}\left( \int_0^t (t-s)^{-q\beta }   \left( \mathbb{E} \left[ |S(t'-s)\Phi_s|_{L_2(U,H)}^p \right] \right) ^{q/p}  ds\right) ^{1/q}dt\\
&\leq c_\beta M
\left( \int_0^{T}w^{\frac{(\beta-1)r}{r-1}} dw\right) ^{(r-1)/r}
|\Phi_{S,\beta}|_{p,q,r}<\infty.
    \end{split}
  \end{equation*}
Then $\Phi^{(t')}_{S,\beta}\in L^{p,q,1}_{\mathcal{P}_T \otimes \mathcal{B}_T}(L_2(U,H))$.
By Lemma~\ref{2016-01-10:00},  
the map
\begin{equation}
  \label{eq:2017-05-11:01}
  g\colon B_0\rightarrow L^{p,q}_{\mathcal{P}_T}(L_2(U,H)),\ t \mapsto \Phi^{(t')}_{S,\beta}(\cdot,t),
\end{equation}
where  $B_0$ is the set of $t$ such that  $|\Phi^{(t')}_{S,\beta}(\cdot,t)|_{p,q}<\infty$, is Borel measurable.
Let us define $g=0$ on $[0,T]\setminus B_0$.
By $\Phi^{(t')}_{S,\beta}\in L^{p,q,1}_{\mathcal{P}_T \otimes \mathcal{B}_T}(L_2(U,H))$
and by measurability of \eqref{eq:2017-05-11:01},
 we have $g\in L^1([0,T],L^{p,q}_{\mathcal{P}_T}(L_2(U,H))$.
We can then apply Theorem~\ref{2015-09-21:01}, 
with the following data:
\begin{itemize}
\itemsep=-1mm
\item $G=[0,T]$, $\mathcal{G}=\mathcal{B}_T$, $\mu=m$;
\item $D_1=\Omega$, $D_2=[0,T]$, $D=\Omega_T$, $\mathcal{D}=\mathcal{P}_T$, $\nu_1=\mathbb{P}$, $\nu_2=m$;
\item $E=L_2(U,H)$;
\item $F=H$;
\item  $\mathfrak L= \mathfrak I$;
\item $g$ as above.
\end{itemize}
The theorem provides measurable functions
$$
X_1\colon (\Omega_T\times [0,T],\mathcal{P}_T \otimes \mathcal{B}_T)\rightarrow  L_2(U,H)
\qquad
X_2\colon (\Omega_T\times [0,T],(\mathcal{F}_T \otimes \mathcal{B}_T) \otimes \mathcal{B}_T)\rightarrow H
$$
such that,
 for $m$-a.e.\ $t\in[0,T]$,
\begin{equation}
  \label{eq:2017-05-08:05}
  \begin{dcases}
X_1(\cdot,t)=g(t)\ \mbox{in\ }L^{p,q}_{\mathcal{P}_T}(L_2(U,H))\\
X_2(\cdot,t)= \mathfrak I (g(t))\ \mbox{in\ }\mathcal{L}^r_{\mathcal{F}_T \otimes \mathcal{B}_T}(\mathbb{W}),
\end{dcases}
\end{equation}
and
\begin{equation}
  \label{eq:2017-05-08:16}
\mbox{for\ }\mathbb{P}\mbox{-a.e.\ }
\omega\in\Omega,\   ( \mathfrak I Y)_t(\omega)=\int_0^TX_2((\omega,t),s)ds\ \forall t\in[0,T],
\end{equation}
where
\begin{equation}
  \label{eq:2017-05-08:06}
  Y_t(\omega)=\int_0^TX_1((\omega,t),s)ds,\ \forall (\omega,t)\in\Omega_T.
\end{equation}
By \eqref{eq:2017-05-08:05}, by definition of $g$, and by joint measurability of $X_1$ and $\Phi_{S,\beta}^{(t')}$, we have
\begin{equation}
  \label{2017-05-08:08}
  X_1((\omega,s),t)=\Phi^{(t')}_{S,\beta}((\omega,s),t)\qquad (\mathbb{P} \otimes m )  \otimes m\mbox{-a.e.\ }((\omega,s),t)\in\Omega_T\times [0,T].
\end{equation}
Then
\eqref{eq:2017-05-08:06} becomes
\begin{equation}
  \label{eq:2017-05-08:07}
  Y_t(\omega)=\int_0^T\Phi_{S,\beta}^{(t')}((\omega,t),s)ds=\mathbf{1}_{[0,t')}(t)S(t'-t)\Phi_t(\omega)\qquad \mathbb{P} \otimes m\mbox{-a.e.\ }(\omega,t)\in\Omega_T,
\end{equation}
hence, in particular,
\begin{equation}
  \label{eq:2017-05-08:14B}
  ( \mathfrak I Y)_{t'}(\omega) =\mathfrak I \left( \mathbf{1}_{[0,t')}S(t'-\cdot)\Phi \right)_{t'}(\omega)\qquad \mathbb{P}\mbox{-a.e.}\ \omega\in\Omega.
\end{equation}

On the other hand, for $m$-a.e.\ $t\in[0,T]$, 
\begin{equation*}
  g(t)=c_\beta\mathbf{1}_{[0,t')}(t)(t'-t)^{\beta-1}S(t'-t)\Phi_{S,\beta}(\cdot,t)\mbox{\ in\ }L^{p,q}_{\mathcal{P}_T}(L_2(U,H)).
\end{equation*}
Then, by assumption on $ \mathfrak I$, we have, for $m$-a.e.\ $t\in[0,T]$,
\begin{equation}
   \mathfrak I (g(t))=
c_\beta\mathbf{1}_{[0,t')}(t)(t'-t)^{\beta-1}S(t'-t)
 \mathfrak I \left( 
\Phi_{S,\beta}(\cdot,t) \right) \mbox{\ in\ }
\mathcal{L}^r_{ \mathcal{F}_t \otimes \mathcal{B}_T}(\mathbb{W}),
\end{equation}
hence, in particular,
for $m$-a.e.\ $t\in[0,T]$,
\begin{equation}
  \label{eq:2017-05-08:10}
       (\mathfrak I (g(t)))_t(\omega)
=
c_\beta\mathbf{1}_{[0,t')}(t)(t'-t)^{\beta-1}S(t'-t)
  \left( \mathfrak I \left( 
\Phi_{S,\beta}(\cdot,t) \right) \right) _t(\omega)\quad 
\mathbb{P}\mbox{-a.e.\ }\omega\in\Omega.
\end{equation}
Now our aim is to replace the last factor in
\eqref{eq:2017-05-08:10} with a process jointly measurable in $(\omega,t)$.
We noticed in \eqref{eq:2017-05-24:01}
that
$
\Phi_{S,\beta}\in L^{p,q,r}_{\mathcal{P}_T \otimes \mathcal{B}_T}(L_2(U,H))
$.
We can then apply Theorem~\ref{2017-05-07:11}.
Let 
\begin{equation}
  \label{eq:2017-05-08:18}
  \Sigma^{\Phi_{S,\beta}}\coloneqq \mathbf{J}(\Phi_{S,\beta})\in L^r_{\mathcal{F}_T \otimes \mathcal{B}_T}(H)
\end{equation}
be the process obtained by applying the map \eqref{2017-05-07:02} to $\Phi_{S,\beta}$.
We know that $\Sigma^{\Phi_{S,\beta}}_t(\omega)$ is 
 $\mathcal{F}_T \otimes \mathcal{B}_T$-measurable in $(\omega,t)\in \Omega_T$ and that, for $m$-a.e.\ $t\in[0,T]$,
\begin{equation}
  \label{eq:2017-05-08:03}
   \left(  \mathfrak I (\Phi_{S,\beta}(\cdot,t)) \right) _t(\omega)=\Sigma^{\Phi_{S,\beta}}_t(\omega)\qquad \mathbb{P}\mbox{-a.e.\ }\omega\in\Omega.
\end{equation}
By \eqref{eq:2017-05-08:10} and \eqref{eq:2017-05-08:03}, we can write, for $m$-a.e.\ $t\in[0,T]$, 
\begin{equation}
  \label{2017-05-08:11}
   \left(  \mathfrak I (g(t)) \right) _t(\omega)=
c_\beta\mathbf{1}_{[0,t')}(t)(t'-t)^{\beta-1}S(t'-t)
\Sigma^{\Phi_{S,\beta}}_t(\omega)\qquad
\mathbb{P}\mbox{-a.e.\ }\omega\in\Omega.
\end{equation}
Then, by \eqref{eq:2017-05-08:05} and taking into account  the joint measurability of $X_2$ and  $\Sigma^{\Phi_{S,\beta}}$,
\begin{equation}
  \label{eq:2017-05-08:13}
  X_2((\omega,t'),t)=
c_\beta\mathbf{1}_{[0,t')}(t)(t'-t)^{\beta-1}S(t'-t)
\Sigma^{\Phi_{S,\beta}}_t(\omega)
\qquad
 \mathbb{P} \otimes  m\mbox{-a.e.\ }(\omega,t)\in \Omega_T.
\end{equation}
By \eqref{eq:2017-05-08:16},
\eqref{eq:2017-05-08:13},
\eqref{eq:2017-05-08:14B},
 we finally obtain, for $\mathbb{P}$-a.e.\ $\omega\in\Omega$,
\begin{equation}\label{2017-05-09:03}
  \begin{split}
\mathfrak I \left(\mathbf{1}_{[0,t')} S(t'-\cdot)\Phi \right)_{t'}(\omega)&=      ( \mathfrak I Y)_{t'}(\omega) =
\int_0^TX_2((\omega,t'),s)ds\\
&=
c_\beta\int_0^{t'} (t'-t)^{\beta-1}S(t'-t)\Sigma^{\Phi_{S,\beta}}_t(\omega)dt.
  \end{split}
\end{equation}
Now define the process $\textbf{C}(\Phi)$  by
\begin{equation}
  \label{eq:2015-11-04:10B}
  (\mathbf{C}(\Phi))_t(\omega)\coloneqq
 \begin{dcases}
 c_\beta \int_0^t(t-s)^{\beta-1}S(t-s)\Sigma_s^{\Phi_{S,\beta}}(\omega)ds&\textrm{if}\ \Sigma^{\Phi_{S,\beta}}(\omega)\in L^r([0,T],H)\\
 0&\textrm{otherwise}
\end{dcases}
\end{equation}
for all $(\omega,t)\in \Omega_T$.
By
\cite[p.\ 129, Proposition 5.9]{DaPrato2014},
 $\mathbf{C}(\Phi)$ is well-defined and pathwise continuous.
By H\"older's inequality,
\begin{equation}
  \label{eq:2015-09-21:09}
  |\mathbf{C}(\Phi)(\omega)|_{\infty}\leq C_{\beta,
r,T,M}|\Sigma^{\Phi_{S,\beta}}(\omega)|_{L^r([0,T],H)}\qquad\forall \omega\in \Omega,
\end{equation}
where $ C_{\beta,r,T,M}$ depends only on $\beta,r,T,M$.
Hence, by recalling \eqref{eq:2017-05-08:18},
\begin{equation}
  \label{eq:2017-05-11:02}
  |\mathbf{C}(\Phi)|_{\mathcal{L}^r_{\mathcal{F}_T \otimes \mathcal{B}_T}(\mathbb{W})}\leq 
C_{\beta,r,T,M} |\Sigma^{\Phi_{S,\beta}}|_{L^r_{\mathcal{F}_T \otimes \mathcal{B}_T}(H)}
\leq C_{\beta,r,T,M, |\mathfrak I|}|\Phi_{S,\beta}|_{p,q,r}.
\end{equation}
where $ C_{\beta,r,T,M, | \mathfrak I|}$ depends only on $\beta,r,T,M$, and on the operator norm $| \mathfrak I|$ of  $ \mathfrak I$.
Moreover, since $t'\in[0,T]$ was arbitrary chosen, and since the choice of $\Sigma^{\Phi_{S,\beta}}$ does not depend on $t'$,
 \eqref{2017-05-09:03} gives,  for all $t'\in[0,T]$,
\begin{equation}
  \label{eq:2017-05-08:19}
    \mathfrak I \left( \mathbf{1}_{[0,t')}S(t'-\cdot)\Phi \right)_{t'}=
(\mathbf{C}(\Phi))_{t'}\qquad \mathbb{P}\mbox{-a.e.}.
\end{equation}
It is clear that
the process $\mathbf{C}(\Phi)$ is uniquely identified 
by \eqref{eq:2017-05-08:19} in $\mathcal{L}^r_{\mathcal{F}_T \otimes \mathcal{B}_T}(\mathbb{W})$, because it is continuous.
Moreover, if $\Phi=\Phi'$ in $\Lambda^{p,q,r}_{\mathcal{P}_T,S,\beta}(L(U,H))$, then
$\mathbf{C}(\Phi)=\mathbf{C}(\Phi')$ in 
$\mathcal{L}^r_{\mathcal{F}_T \otimes \mathcal{B}_T}(\mathbb{W})$.
Linearity of $\textbf{C}$ is clear as well.
This concludes the proof that the map \eqref{eq:2017-05-08:20} is well-defined on 
$\Lambda^{p,q,r}_{\mathcal{P}_T,S,\beta}(L(U,H))$, linear, 
and that \eqref{2017-05-08:01} is satisfied.
Continuity with operator norm bounded by a constant depending only on $\beta$, $r$, $T$, $M$, $| \mathfrak I|$
is due to \eqref{eq:2017-05-11:02}.
\end{proof}

We remark that the joint measurability of $X_1$, $X_2$, $\Sigma^{\Phi_{S,\beta}}$, provided by 
Theorem~\ref{2015-09-21:01} and Theorem~\ref{2017-05-07:11}, play a central role in order to obtain the factorization formula \eqref{eq:2015-11-04:10B}.

\addcontentsline{toc}{chapter}{References}
\bibliographystyle{plain}
\bibliography{rosestolato_2018-06-20_note-stoch-conv.bbl}

\end{document}